\documentclass[oneside,reqno]{amsart}
\pdfoutput=1
\usepackage{graphicx}

\usepackage{subfigure}
\usepackage{amsmath}
\usepackage{mathrsfs} 
\usepackage{amsfonts}
\usepackage{amssymb}%
\setcounter{MaxMatrixCols}{30}
\usepackage{amsthm}
\usepackage{amscd}
\usepackage{verbatim}
 \usepackage{mathabx}
 \usepackage{dsfont}
 \usepackage[all]{xypic} 
\usepackage{endnotes}
\usepackage{enumerate}
\usepackage{mathtools}
\usepackage{comment}
\usepackage{tikz}
\usepackage{color}

\newcommand{\mcA}{\mathcal{A}}

\newcommand{\mcF}{\mathcal{F}}

\newcommand{\mcL}{\mathcal{L}}

\newcommand{\mcR}{\mathcal{R}}

\newcommand{\C}{\mathbb{C}}

\newcommand{\N}{\mathbb{N}}

\newcommand{\bfd}{\mathbf{d}}

\newcommand{\ot}{\otimes}

\newcommand{\GL}{\operatorname{GL}}

\newcommand{\Vect}{{\rm Vect}}
\newcommand{\fdVect}{\Vect}
\newcommand{\rank}{\operatorname{rank}}

\newcommand{\unit}{\mathds{1}}
\newcommand{\Ob}{\text{Ob}}

\newcommand{\Hom}{\text{Hom}}

\newcommand{\tn}{\textnormal}

\newcommand{\End}{\textnormal{End}}
\newcommand{\ott}{\bigotimes}

\newcommand{\op}{\oplus}
\newcommand{\bop}{\bigoplus}

\newcommand{\rhu}{\overset{\rightharpoonup}}
\newcommand{\lhu}{\overset{\leftharpoonup}}
\newcommand{\into}{\hookrightarrow}

\newcommand{\GLd}{\textnormal{$\textnormal{GL}_{\textbf{d}}$}}

\newcommand{\onto}{\twoheadrightarrow}

\theoremstyle{plain}
\newtheorem{theorem}{Theorem}[section]
\newtheorem{corollary}[theorem]{Corollary}
\newtheorem{proposition}[theorem]{Proposition}
\newtheorem{lemma}[theorem]{Lemma}

\theoremstyle{definition}
\newtheorem{defn}[theorem]{Definition}
\theoremstyle{definition}

\theoremstyle{definition}

\theoremstyle{definition}

\theoremstyle{definition}

\theoremstyle{definition}
\newtheorem{fact}{Fact}

\usepackage{hyperref}

\bibliographystyle{plain}
\title{A Finite-Tame-Wild Trichotomy Theorem for Tensor Diagrams}
\author{Jacob Turner${}^\dagger$}
\thanks{${}^\dagger$ Korteweg-de Vries Institute for Mathematics, University of Amsterdam, 1098 XG Amsterdam, Netherlands.}

\begin{document}

 \begin{abstract}
 In this paper, we consider the problem of determining when two tensor networks are equivalent under a heterogeneous change of basis. In particular, to a string diagram in a certain monoidal category (which we call tensor diagrams), we formulate an associated abelian category of representations. Each representation corresponds to a tensor network on that diagram. We then classify which tensor diagrams give rise to categories that are finite, tame, or wild in the traditional sense of representation theory. For those tensor diagrams of finite and tame type, we classify the indecomposable representations. Our main result is that a tensor diagram is wild if and only if it contains a vertex of degree at least three. Otherwise, it is of tame or finite type.
 \end{abstract}
  \maketitle
  \begin{center}{\small {\sc Keywords:} Tensor Networks, Representation Theory, Wild Representations, Tame Representations}
\end{center}
\section{Introduction}

Tensor networks were first introduced by Roger Penrose in the 1970's as a way of studying systems in physics \cite{penrose1971applications}. They have exploded in the past few decades as an active area of research with various different applications. In quantum physics, they generalize the quantum circuit model in quantum computing, are used in the study of approximating ground states of Hamiltonians \cite{white1992density,verstraete2004renormalization,orus2014practical}, appear as models in statistical physicals, including the Potts and Ising models \cite{delaharpe1993graph}, and appear as invariants of quantum entanglement \cite{turner2017complete,grassl1998computing,hero2009stable,johansson2012topological,kraus2010local,MR2039690,luque2006algebraic,macicazek2013many}.

In algebraic complexity theory, tensor network contraction is a $\#\mathsf{P}$-hard problem which can be used to design algorithms for combinatorial counting problems. Leslie Valiant found a certain class of tensor networks that always had a polynomial time evaluation, which he called holographic algorithms \cite{ValiantFOCS2004,MR2120307,landsberg2012holographic,morton2010pfaffian}. With this framework, Valiant found new polynomial time algorithms not known to have any \cite{valiant2006accidental}. Later attempts at generalization also found new polynomial time algorithms for computing partition functions and Tutte polynomials \cite{morton2015generalized, morton2015computing}. These algorithms have lead to interests in dichotomy theorems regarding the hardness of tensor network contraction \cite{bulatov2005complexity,dyer2010effective,cai2011non,cai2012complexity,cai2010holographic,cai2013complete,cai2015holant}.
Other notable examples of tensor networks are graphical models in algebraic statistics and phylogenetic tree models in evolutionary biology. 

Given a tensor network, there is natural set of basis changes that typically leave what is considered most important about the tensor network invariant. This group action is sometimes called a heterogeneous change of basis. When tensor network states are used to approximate the ground states of Hamiltonians, these states can be viewed as covariant maps, with respect to this group action, from the representation space of some tensor diagram to a variety of states. Furthermore, the variety of such states carries a similar group action which is poorly understood. Studying the orbits of these tensor networks under the natural group action is a central problem in this area.

Tensor networks are themselves invariant polynomials that are invariant under the induced group action (cf. \cite{biamonte2013tensor}). Invariant theory and the study of entanglement have a close relationship \cite{PhysRevA.62.062314,luque2007unitary,klyachko2002coherent}. Recently, tensor networks we shown to give complete sets of invariants of density operators under local unitary equivalence \cite{turner2017complete}, a specific case of determining the orbits of certain tensor networks under heterogeneous changes of basis. This makes them an important measure of entanglement.

In the study of tensor networks as circuits, the question of which tensor networks are equivalent to those with a known polynomial time evaluation procedure is of great interest. This includes the question of both classifying and recognizing Holant problems, holographic circuits, and matchgates (cf. \cite{margulies2016polynomial,turner2017tensors}). Recognizing a holographic circuit in an arbitrary basis is still an open problem as they are formulated with respect to a specific basis. 

In SLOCC paradigm of studying entanglement as well as the study of tensor rank in applied algebraic geometry both consider the the natural group action on very simple types of tensor networks. In this setting, determining the orbits is already known to be intractable in most cases \cite{belitskii2003complexity}. More specifically, it was shown that determining the orbits of trivalent tensors under the natural group action is as difficult as determining simultaneous similarity of pairs of matrices.

As such, the orbit classification problem of trivalent tensors is a wild problem and is as difficult to solve as classifying the orbits of any finite dimensional algebra. In this paper, we expand the classification not only to tensors of every arity, but to all tensor diagrams. A tensor diagram is a string diagram in the graphical language of free monoidal category. We will consider all tensor networks that can be associated to a given tensor diagram and give a complete classification of which tensor diagrams have finitely many orbits in every dimension, a finite number of one parameter families in every dimension, and which are wild.

We first reinterpret tensor networks on a tensor diagram as an abelian category of representations of a combinatorial object, similar to the approach used in quiver theory. We then construct explicit inclusions of wild subcategories in the category of representations of tensor diagrams of wild type. Otherwise, as we are in an abelian category, we have a notion of indecomposable objects, and for those tensor diagrams of finite and tame type, we classify the indecomposable representations.

Our main theorem states that a tensor diagram is wild if and only if it have vertex of degree at least three and otherwise it is of tame or finite type. Furthermore, we state explicitly which tensor diagrams are of tame type and which are of finite type.

\section{Preliminaries}

We first give a pedestrian definition of tensor diagram and a representation of such an object. First we define a \emph{semi-graph} to be a finite collection of vertices and edges $(V,E)$, where each edge is incident to \emph{at most} two vertices. An edge may also only be incident to zero or one vertex. We call such an edge \emph{dangling}. If a tensor diagram has no dangling wires, it is called \emph{closed}.

\begin{defn}
 A tensor diagram $T=(V,E)$ is a directed semi-graph.
\end{defn}

In the context of tensor diagrams, instead of saying ``edges'', we will instead use the term ``wire'' instead. This is more consistent with the literature on tensor networks, although sometimes the term string is used instead. A \emph{subdiagram} of a tensor diagram $(V,E)$ is a pair $V'\subseteq V$ and $E'\subseteq E$ where $(V',E')$ is a semi-graph with the inherited orientation. A subdiagram $(V',E')\subseteq (V,E)$ is called \emph{induced} if for every $v,w\in V'$, the set of wires between $v,w$ in $E'$ is the same as the set of wires between them in $E$.

\begin{defn}
 Let $T=(V,E)$ be a tensor diagram. A representation of $T$, denoted $R(T)$ is a pair of maps: For every $e\in E$, $R(e)$ is a vector space over a given field $k$.
 For each vertex $v$, let $\rhu{N}(v)$ denote set of incoming wires and $\lhu{N}(v)$ the set of outgoing wires. Then $R(v)$ is a matrix in $$\Hom\bigg(\ott_{A\in\rhu{N}(v)}{A},\ott_{B\in\lhu{N}(v)}{B}\bigg).$$ We call a representation of a tensor diagram a \emph{tensor network}.
\end{defn}

As in quiver theory, representations can be completely defined in a combinatorial manner but benefit by viewing them in terms of categories. Finite quivers correspond to finite categories, and tensor diagrams will be morphisms in a finitely generated monoidal category, which we will define shortly.

We assume in this paper that the reader is familiar with compact closed monoidal categories. In particular we consider free compact closed monoidal categories; such categories are always equivalent to a category of diagrammatic languages whose morphisms are in bijection with labeled tensor diagrams. Furthermore, we shall need basic facts about cup and cap morphisms and symmetric braiding morphisms in a monoidal category. Excellent treatments of this subject can be found in \cite{maclane1998categories,joyalgeometry,selinger2010survey}.

Let us consider any finitely generated free compact closed monoidal category $\mathcal{F}$. By finitely generated, we mean that $\Ob(\mathcal{F})$ is generated by taking all tensor products of a finite number of \emph{atomic} objects that cannot be written as a tensor product of other objects, with the obvious exception of the natural isomorphisms $\unit\ot A\to A$, $A\ot \unit\to A$, for any object $A$ and $\unit$ the monoidal unit in $\mathcal{F}$.

We note that our theory need not consider a finitely generated monoidal category; however, since a tensor diagram can only represent a finite number of objects and morphisms, this is sufficient for our purposes. Indeed, we could consider full subcategories of a monoidal category induced by the objects in a tensor diagram and this would be equivalent. Using finitely generated monoidal categories simplifies the technical aspect of monoidal signatures that we will need.

Since we assumed it was free, the category $\mcF$ can be visualized in terms of diagrams, as previously mentioned (glossing over the technicality that we may need to replace $\mcF$ with an equivalent category). We take the convention that primal objects in $\mcF$ are given by right oriented wires, dual objects by left oriented wires, and that tensor products shall be taken vertically in order from top to bottom. As usual, the unit $\unit$ is denoted by empty space.

Let $Ob_{\mcA}(\mcF)$ denote the atomic \emph{primal} objects of $\mcF$. We will assume that these objects have a labeling $\ell:\mcL\to Ob_{\mcA}(\mcF)$, where $\mcL$ are the labels and we only demand that the unit be labeled $\unit$. Any other object in $\mcF$ inherits a label from $\ell$ as follows: if some object can be expressed as tensor product of $k$ atomic objects in $\mcF$, then its label is $A_1\ot\cdots\ot A_k$, where $A_i$ is the label of the $i$th object as specified by $\ell$. If if $A$ is the label of a primal object, then $A^*$ is the label of its dual object. We note that one can also form $A^{**}$, but this is naturally isomorphic to $A$ so we do not need this label. We similarly define labels for morphisms in $\mcF$. Since $\mcF$ is a free category, we have no non-trivial equivalence of labels. Together, these labels form a monoidal signature; for a more precise definition of monoidal signatures, see \cite{joyal1988planar,joyal1991geometry}.

Monoidal signatures formally allow us to use variables to describe objects and morphisms in $\mcF$ where the variables can take any value in the monoidal signature. In a tensor diagram, one may label wires with an object variable and vertices with morphism variables. We remark that one does not typically label the unit in a tensor diagram. In this work, we shall consider all labels, both on wires and on vertices, as distinct. This coincides with the definition given above.

 We note that the wire denoting the object $A$ is naturally associated with the identity morphism on $A$. A tensor diagram also forms a morphism from the tensor product of the dangling dual wires to the tensor product of the dangling primal wires. If there are no dangling wires, the tensor diagram is a morphism in $\Hom(\unit,\unit)$. We now present and equivalent, but more abstract definition of a representation of a tensor diagram a.k.a. a tensor network. 

\begin{defn}
 Let $\mcF$ be a finitely generated free compact closed monoidal category and $\Psi$ be a strict monoidal functor from $\mcF$ into the category $\fdVect_k$ (the category of finite dimensional vector spaces over $k$) for some field $k$. A \emph{tensor network} is the image of a morphism of $\mcF$ under the map $\Psi$.
\end{defn}

\vspace{.3cm}
\noindent{\textbf{Organization of the paper}}. In Section \ref{sec:repspace}, we identity the space of representations of fixed dimension of a tensor diagram and show that it forms an abelian category. We also discuss the natural group action that arises from isomorphism classes of representations. Then in Section \ref{sec:forbsuddiag}, we formulate the notions of a finite, tame, and wild tensor diagrams in terms of their category of representations. We then explicitly show that certain tensor diagrams are of wild type. In Section \ref{sec:inclusion}, we show that the category of representations of a tensor diagram generally contains the category of representations of any subdiagram. Lastly, in Section \ref{sec:tameandfinite}, we classify those tensor diagrams which are tame and finite and classify their indecomposable representations.

\section{The Space of Representations of a Tensor Diagram}\label{sec:repspace}

Given a tensor diagram $T$ with wires $E$ and vertices $V$, a \emph{dimension vector} is a tuple of non-negative integers $\textbf{d}=(d_1,\dots,d_{|E|})$. From here on out, we assume that our wires are labeled by natural numbers. We say that representation of $T$ has dimension vector $\textbf{d}$ if the vector space associated to the wire $i$ has dimension $d_i$, for all $i\in[|E|]$. Given such a representation, we can consider the wire $i$ to have the vector space $k^{d_i}$ associated to it. If we do this, the set of all representation of $T$ with dimension vector $\textbf{d}$ form a vector space, namely

$$\mcR_{\textbf{d}}(T):=\bop_{v\in V}{\Hom\bigg(\ott_{i\in\rhu{N}(v)}{k^{d_i}},\ott_{j\in\lhu{N}(v)}{k^{d_j}}\bigg)}.$$

However, by choosing the vector space on the $i$th wire to be $k^{d_i}$, we have made an implicit choice of basis unnecessarily. We note that we could change this basis and have an isomorphic representation. This induces an action of the following group
$$\GLd(T):=\bop_{i\in E}{\GL(d_i,k)}$$ on $\mcR_{\textbf{d}}(T)$ acting by a change of basis on each wire. To be very concrete, given an element $g=(g_1,\dots,g_{|E|})\in\GLd(T)$, the induced action on the matrix associated to a vertex $v$, call it $M_v$, is as follows:
$$g.M_v=\bigg(\ott_{j\in\lhu{N}(v)}{g_j}\bigg)M_v\bigg(\ott_{i\in\rhu{N}(v)}{g_i^{-1}}\bigg).$$ The orbits of $\GLd(T)$ on $\mcR_{\textbf{d}}(T)$ are precisely the isomorphism classes of representations of $T$ with dimension vector $\bfd$. 

We denote the space of all representations of $T$ by $\mcR(T)$. We will see that this space has the structure of an abelian category in addition to being a closed compact monoidal category. This category will be naturally graded by dimension vectors $\bfd$.

We now define the morphisms between two representations of a tensor diagram. Let $R_1(T)$ and $R_2(T)$ be two representations of $T=(V,E)$. A morphism $\Phi:R_1(T)\to R_2(T)$ is a collection of linear maps $\varphi_e\in \Hom(R_1(e),R_2(e))$ for $e\in E$ such that the following diagram commutes for every $v\in V$:

\[
\xymatrix{
\displaystyle{\ott_{i\in\rhu{N}(v)}{R_1(i)}} \ar[r]^{R_1(v)} \ar[d]_{\varphi_{\rhu{N}(v)}} & \displaystyle{\ott_{j\in\lhu{N}(v)}{R_1(j)}} \ar[d]^{\varphi_{\lhu{N}(v)}}\\
\displaystyle{\ott_{i\in\rhu{N}(v)}{R_2(i)}} \ar[r]^{R_2(v)} & \displaystyle{\ott_{j\in\lhu{N}(v)}{R_2(j)}}
}
\]
where $\varphi_{\lhu{N}(v)}:=\ott_{j\in\lhu{N}(v)}{\varphi_j}$ and $\varphi_{\rhu{N}(v)}$ similarly.

We note that it is entirely possible for some vertex $v$ to have the property that either $\rhu{N}(v)$ or $\lhu{N}(v)$ is $\emptyset$. In that case, we recall that the empty tensor product is defined to be $\C$.

We see immediately that $\Hom(R_1(T),R_2(T))$ forms a vector space and that composition of morphisms $\varphi:R_1(T)\to R_2(T)$ and $\psi:R_2(T)\to R_3(T)$ is bilinear, given by a collection of maps $\psi_e\varphi_e$. As such this category is enriched over the category of abelian groups.

It is clear that a morphism is a monomorphism if and only if each $\varphi_e$ is injective and likewise an epimorphism if and only if every $\varphi_e$ is a surjection. Let us consider a monomorphism $\varphi:R_1(T)\into R_2(T)$ given by a collection of maps $\varphi_e$, $e\in E$. Consider a map $\varphi_e:R_1(e)\into R_2(e)$; let $R_3(e)$ denote the cokernel of this map and let $\psi_e$ be the projection map $R_2(e)\onto R_3(e)$. Then for every $v\in V$, define $R_3(v)$ to be $$R_3(v):=\bigg(\ott_{j\in\rhu{N}(v)}{\psi_j}\bigg) R_2(v) \bigg(\ott_{i\in\lhu{N}(v)}{\psi_i}\bigg).$$
So the collection of maps $\psi_e$ define a morphism $\psi:R_2(T)\to R_3(T)$. Lastly, let us denote by $0$ the representation of $T$ given by assigning $\{0\}$ to every wire and the zero map to every vertex. Then we see that 
$$0\to R_1(T)\xrightarrow{\varphi} R_2(T)\xrightarrow{\psi} R_3(T)\to 0$$ is a short exact sequence. So every monomorphism is normal. The representation $R_3(T)$ is the cokernel of $\varphi$. Note that our construction of $R_3(T)$ did not depend on $\varphi$ being a monomorphism, so every map has a cokernel. We also need to show that every epimorphism is normal and every map has a kernel. However, this will follow by describing dual representations that reverse arrows in short exact sequences. We do this now.

Given a representation $R(T)$, we define its dual representation $R^*(T)$ as follows (this necessarily involves a choice of basis). For every $e\in E$, we define $R^*(e):=R(e)^*$ and for every $v$, $R^*(v):=R(v)^T$.

Now suppose that we have a morphism $\varphi:R_1(T)\to R_2(T)$ defined by a collection of maps $\varphi_e$. Then we have dual morphism $\varphi^T:R_2(T)^* \to R_1(T)^*$ given by the maps $\varphi_e^T$. Furthermore, it is easy to check that $\varphi$ is a monomorphism if and only if $\varphi^T$ is an epimorphism and vice versa.

Now suppose we have an epimorphism $\varphi:R_1(T)\onto R_2(T)$. We look at the monomorphism $\varphi^T:R_2(T)^*\into R_1(T)^*$. We know that we can find a representation $R_3(T)$ and morphism $\psi$ such that the sequence
$$0\to R_2(T)^* \xrightarrow{\varphi^T} R_1(T)^* \xrightarrow{\psi} R_3(T)\to 0$$ is exact since every monomorphism is normal. But then we get the following short exact sequence
$$0\to R_3(T)^* \xrightarrow{\psi^T} R_1(T) \xrightarrow{\varphi} R_1(T)\to 0$$ which shows that the epimorphism $\varphi$ is normal. Furthermore, given a morphism $\varphi: R_1(T)\to R_2(T)$, the kernel of this map is isomorphic to the cokernel of the map $\varphi^T:R_2^*(T)\to R_1^*(T)$. So every morphism has a kernel as well. We have now proved the following proposition.

\begin{proposition}
For every tensor diagram $T$, every monomorphism and epimorphism in the category $\mcR(T)$ is normal. Furthermore, this every morphism has a kernel and cokernel
\end{proposition}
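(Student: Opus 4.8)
The plan is to reduce everything to linear algebra on the individual wire spaces and then transport the statements about epimorphisms and kernels to those about monomorphisms and cokernels via the transpose duality $R\mapsto R^*$. So I would first record the componentwise dictionary that the surrounding discussion already uses: a morphism $\varphi=(\varphi_e)_{e\in E}$ is a monomorphism iff each $\varphi_e$ is injective and an epimorphism iff each $\varphi_e$ is surjective, and the zero representation $0$ together with the evident direct sums (direct sum the wire spaces, make the vertex matrices block diagonal) makes $\mcR(T)$ additive, so that the notions ``normal'', ``kernel'' and ``cokernel'' are meaningful.

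Second, for an \emph{arbitrary} morphism $\varphi:R_1(T)\to R_2(T)$ I would build a cokernel by setting $R_3(e):=\operatorname{coker}(\varphi_e)$ with canonical surjection $\psi_e:R_2(e)\onto R_3(e)$, and defining each vertex datum $R_3(v)$ to be the map induced on quotients by $R_2(v)$ — concretely, $R_2(v)$ post-composed with $\ott_{j\in\lhu{N}(v)}\psi_j$ and pre-composed with a linear splitting of $\ott_{i\in\rhu{N}(v)}\psi_i$, which exists because $k$ is a field. The key point to check here, and the one I expect to carry the real weight of the argument, is that each $R_3(v)$ does not depend on the chosen splitting and that the family $(\psi_e)_{e\in E}$ really is a morphism $\psi:R_2(T)\to R_3(T)$; this amounts to verifying that for every vertex $v$ the commuting square defining $\varphi$ forces $R_2(v)$ to carry $\ker\big(\ott_{i\in\rhu{N}(v)}\psi_i\big)$ into $\ker\big(\ott_{j\in\lhu{N}(v)}\psi_j\big)$, so that $R_2(v)$ descends to the quotients. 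Granting this, $(R_3(T),\psi)$ is a cokernel of $\varphi$; and when $\varphi$ is moreover a monomorphism, $0\to R_1(T)\xrightarrow{\varphi}R_2(T)\xrightarrow{\psi}R_3(T)\to 0$ is a short exact sequence in $\mcR(T)$, so every monomorphism is normal.

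Third, I would dispose of epimorphisms and kernels by duality. Define $(-)^*$ on objects by $R^*(e):=R(e)^*$ and $R^*(v):=R(v)^T$, and on morphisms by $\varphi^T:=(\varphi_e^T)_{e\in E}$; a direct check shows $(-)^*$ is a contravariant functor on $\mcR(T)$ that exchanges monomorphisms and epimorphisms (because $\varphi_e$ is injective iff $\varphi_e^T$ is surjective, and surjective iff $\varphi_e^T$ is injective) and that $R^{**}\isom R$ naturally. Given an epimorphism $\varphi:R_1(T)\onto R_2(T)$, its dual $\varphi^T:R_2^*(T)\into R_1^*(T)$ is a monomorphism, so by the previous step it fits into a short exact sequence $0\to R_2^*(T)\to R_1^*(T)\to Q\to 0$; dualizing and using $R^{**}\isom R$ produces a short exact sequence $0\to Q^*\to R_1(T)\xrightarrow{\varphi}R_2(T)\to 0$, which shows $\varphi$ is normal. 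Finally, for an arbitrary $\varphi:R_1(T)\to R_2(T)$, applying the cokernel construction of the second step to $\varphi^T:R_2^*(T)\to R_1^*(T)$ and then dualizing the resulting surjection exhibits a kernel for $\varphi$. The only residual care is to apply the canonical isomorphisms $R^{**}\isom R$ compatibly, so that the dualized sequences remain exact in $\mcR(T)$; this is routine once $(-)^*$ is seen to send the short exact sequences produced in the second step to short exact sequences.
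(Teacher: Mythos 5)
Your route -- wire-wise cokernels for monomorphisms, then transpose duality for epimorphisms and kernels -- is the same as the paper's, but the step you yourself flag as carrying ``the real weight of the argument'' is exactly where it breaks, and it cannot be verified because it is false in general. The commuting square for $\varphi$ only controls $R_2(v)$ on $\operatorname{im}\bigl(\ott_{i}\varphi_i\bigr)=\ott_i\operatorname{im}\varphi_i$, whereas $\ker\bigl(\ott_i\psi_i\bigr)$ is the larger subspace $\sum_i R_2(1)\ot\cdots\ot\operatorname{im}\varphi_i\ot\cdots$ as soon as $v$ has two or more incoming wires, and nothing forces $R_2(v)$ to send it into $\ker\bigl(\ott_j\psi_j\bigr)$. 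Concretely, let $T$ be a single vertex $v$ with incoming wires $1,2$ and outgoing wire $3$; take $R_1=(k,0,0)$ with $R_1(v)=0$, $R_2=(k,k,k)$ with $R_2(v)\colon k\ot k\to k$ the canonical isomorphism, and $\varphi=(\id,0,0)$. Each $\varphi_e$ is injective and the defining square commutes (both composites have domain $k\ot 0=0$), so $\varphi$ is a monomorphism. The wire-wise quotients give $\psi_1=0$, $\psi_2=\psi_3=\id$, so $\ott_i\psi_i$ has codomain $0\ot k=0$ while $\psi_3\circ R_2(v)\neq 0$: no choice of $R_3(v)$ (by splitting or otherwise) makes $(\psi_e)$ a morphism, i.e.\ the descent you defer to a ``routine verification'' fails.

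Moreover the defect is not just in the construction. In this example any morphism $f\colon R_2(T)\to X(T)$ with $f\varphi=0$ must have $f_1=0$, and then $f_3R_2(v)=X(v)(f_1\ot f_2)=0$ with $R_2(v)$ surjective forces $f_3=0$; but the monomorphism $\iota=(\id,0,\id)$ from $K=(k,0,k)$, $K(v)=0$, satisfies $f\iota=0$ and cannot factor through $\varphi$ (that would give $\id_k=\varphi_3u_3=0$). So this $\varphi$ is a monomorphism that is the kernel of no morphism, and the wire-wise quotient is not the categorical cokernel (which here has $0$, not $k$, on wire $3$). The paper's own argument asserts the same descent without proof, so your proposal faithfully reproduces its strategy, but as a proof it has a genuine gap at precisely the point you identified: the key step fails whenever some vertex has at least two incoming wires (dually, the kernel step via $(-)^*$ fails with at least two outgoing wires), so the statement cannot be established by this componentwise construction without modifying either the construction or the category.
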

We now wish to describe how to take the direct sum of two representations of $T$. This will be a map $\oplus:\mcR_{\bfd}(T)\times \mcR_{\bfd'}(T)\to \mcR_{\bfd+\bfd'}(T)$, where $\bfd+\bfd'$ is coordinate-wise addition. 

Let $R(T)$ be a representation of $T$. For $i\in E$,  let $R(i)$ denote the vector space associated to $i$ and for $v\in V$, let $R(v)$ denote the matrix associated to $v$. Now consider two representations $R_1(T)$ and $R_2(T)$ of $T$. For $i\in E$, we define $R_1\oplus R_2(i):=R_1(i)\oplus R_2(i)$. To define $R_1\oplus R_2(v)$ is slightly trickier, however. We recall the tensor direct sum $\oplus_t$.

\begin{defn}
 Let $T_1\in V=\ott_{i=1}^{n}{V_i}$ and $T_2\in W=\ott_{i=1}^{n}{W_i}$. Then we note that there are natural projections of $\ott_{i=1}^{n}{(V_i\op W_i)}$ onto $V$ and $W$,  denote them $p_V$ and $p_W$, respectively. Then $T_1\op_t T_2$ is the unique tensor $T\in \ott_{i=1}^{n}{(V_i\op W_i)}$ such that $p_V(T)=T_1$ and $p_W(T)=T_2$.
\end{defn}
\noindent We now define $R_1\oplus R_2(v):=R_1(v)\op_t R_2(v)$. We note that if $$R_1(v)\in\Hom\bigg(\ott_{i\in\rhu{N}(v)}{k^{d_i}},\ott_{j\in\lhu{N}(v)}{k^{d_j}}\bigg)\cong \ott_{i\in\rhu{N}(v)}{(k^{d_i})^*}\ot \ott_{j\in\lhu{N}(v)}{k^{d_j}},$$
$$R_2(v)\in\Hom\bigg(\ott_{i\in\rhu{N}(v)}{k^{d'_i}},\ott_{j\in\lhu{N}(v)}{k^{d'_j}}\bigg)\cong \ott_{i\in\rhu{N}(v)}{(k^{d'_i})^*}\ot \ott_{j\in\lhu{N}(v)}{k^{d'_j}},$$ and so $R_1(v)\op_t R_2(v)$ is in the space $$\ott_{i\in\rhu{N}(v)}{(k^{d_i}\op k^{d'_i})^*}\ot\ott_{j\in\lhu{N}(v)}{(k^{d_j}\op k^{d'_j})}\cong \Hom\bigg(\ott_{i\in\rhu{N}(v)}{k^{d_i+d'_i}},\ott_{j\in\lhu{N}(v)}{k^{d_j+d'_j}}\bigg)$$ which is the correct space given how we defined the direct sum of $R_1(T)$ and $R_2(T)$ on the wires.

We see that the direct sum of a finite number of representations is both a product and coproduct in this category. Thus it makes perfect sense to talk about irreducible, simple, and semi-simple representations. This completes the proof that $\mcR(T)$ is an abelian category.

\begin{theorem}\label{thm:abeliancategory}
 Given a tensor diagram $T$, $\mcR(T)$ is an abelian category.
\end{theorem}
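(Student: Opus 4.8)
The plan is to verify, one clause at a time, the standard axioms for an abelian category, noting that all but one of them have in effect already been established in the preceding discussion. Recall that a category is abelian precisely when it is preadditive with bilinear composition, has a zero object, has all binary biproducts, has a kernel and a cokernel for every morphism, and has the property that every monomorphism is normal and every epimorphism is normal. I would organize the write-up around these five clauses.

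For preadditivity I would simply point to the earlier observation that $\Hom(R_1(T),R_2(T))$ is a $k$-vector space under wirewise addition $\varphi+\varphi':=(\varphi_e+\varphi'_e)_{e\in E}$ and that composition is bilinear. For the zero object I would take the representation $0$ with $0(e)=\{0\}$ for every $e\in E$ and $0(v)$ the zero map for every $v\in V$, and observe that the commuting squares in the definition of a morphism are then automatic, so $\Hom(0,R(T))$ and $\Hom(R(T),0)$ are singletons and $0$ is both initial and terminal. For kernels and cokernels I would cite the construction of $R_3(T)$ given above, which produces the cokernel of an arbitrary morphism $\varphi$ (the construction, as remarked, never used injectivity of $\varphi$), together with the dualization $R\mapsto R^*(T)$, which turns the cokernel of $\varphi^T$ into the kernel of $\varphi$. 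Normality of every monomorphism and epimorphism is exactly the content of the Proposition proved just above. So four of the five clauses are already on hand.

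The remaining clause, existence of binary biproducts, is where the real work sits, and I would spend the bulk of the proof here. I would set $(R_1\oplus R_2)(e):=R_1(e)\oplus R_2(e)$ on wires and $(R_1\oplus R_2)(v):=R_1(v)\oplus_t R_2(v)$ on vertices, equip each wire with the coordinate inclusions $\iota^m_e$ and projections $\pi^m_e$ for $m\in\{1,2\}$, and then check: (a) that the families $(\iota^m_e)_{e\in E}$ and $(\pi^m_e)_{e\in E}$ are morphisms of representations, i.e. satisfy the defining square at each vertex $v$; and (b) the biproduct identities $\pi^m\iota^m=\id$, $\pi^m\iota^{m'}=0$ for $m\neq m'$, and $\iota^1\pi^1+\iota^2\pi^2=\id$. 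Part (b) is immediate, since these identities hold wire by wire for ordinary vector-space biproducts. For part (a) the key point, which is really just unwinding the definition of the tensor direct sum, is that, in a basis adapted to the wire decompositions, $R_1(v)\oplus_t R_2(v)$ is the block-diagonal tensor supported on the subspace $\bigl(\ott_{i}R_1(i)\bigr)^{*}\ot\bigl(\ott_{j}R_1(j)\bigr)\ \oplus\ \bigl(\ott_{i}R_2(i)\bigr)^{*}\ot\bigl(\ott_{j}R_2(j)\bigr)$ of $\Hom\bigl(\ott_{i}(R_1(i)\oplus R_2(i)),\ \ott_{j}(R_1(j)\oplus R_2(j))\bigr)$; equivalently, that $p_V(R_1(v)\oplus_t R_2(v))=R_1(v)$ and $p_W(R_1(v)\oplus_t R_2(v))=R_2(v)$ with $p_V,p_W$ induced by the wire projections. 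Reading these two identities through the $\Hom/\ot$ identification gives exactly the commuting squares required for $\pi^m$ and $\iota^m$. Once (a) and (b) hold, the standard fact that in a preadditive category a biproduct is at once a product and a coproduct completes this clause, and assembling all five clauses gives the theorem.

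I expect step (a) to be the only genuine obstacle: recognizing that $\oplus_t$ on vertices is the block-diagonal construction and that this is precisely what makes the coordinate inclusions and projections into representation morphisms. Everything else — preadditivity, the zero object, kernels and cokernels, normality — has already been carried out in this section, so the proof is mostly a matter of citing those facts and checking the biproduct bookkeeping.
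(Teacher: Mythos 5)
Your proposal is correct and follows essentially the same route as the paper: the paper's proof is exactly the preceding discussion (Hom-spaces with bilinear composition, normality of monos and epis via the dual representation, kernels and cokernels, and the tensor direct sum $\oplus_t$ giving a biproduct). The only difference is that you spell out the verification that the coordinate inclusions and projections are morphisms of representations (the block-diagonal reading of $\oplus_t$), a step the paper asserts without detail, which is a harmless and correct elaboration.
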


Actually, this category also has the structure of a closed compact monoidal category as well. While we do not prove this rigorously, it is easy to see. To define $R_1\ot R_2(T)$, define $R_1\ot R_2(e):=R_1(e)\ot R_2(e)$ and $R_1\ot R_2(v):=R_1(v)\ot R_2(v)$ for all $e$ and $v$. The monoidal unit is given by the representation $\unit(T)$, defined as $\unit(e)=\C$ for all $e$ and $\unit(v)=1$ for all $v$. There is a clear isomorphism $R_1\ot R_2(T)$ and $R_2\ot R_1(T)$ that defines a symmetric braiding. Lastly, we have already demonstrated how to construct dual objects and morphisms.

\section{Forbidden Subdiagrams}\label{sec:forbsuddiag}

In the representation theory of any object, the most important task is to classify the indecomposable representations. In some cases, one can hope for a finite number of indecomposable representations, or that every indecomposable representation belongs to one of a finite number of one parameter families. As one encounters two (and higher) parameter families of indecomposable representations, classification becomes more difficult.

On the flip side, there are objects whose representation theory is prohibitively complex. This is because their category of modules contains the category of modules of every finite dimensional algebra as a full subcategory. An unfortunate motif in representation theory is that once an object has a two-parameter family of indecomposable representations, it become intractable in this way. We make these notions precise and show that for tensor diagrams, such a trichotomy holds.

\begin{defn}
 Given a tensor diagram $T$, we say that $T$ is
 \begin{itemize}
 \item \emph{finite} if there are finitely many indecomposable representations in $\mcR_\bfd(T)$.
  \item \emph{tame} if for every dimension vector $\bfd$, all but finitely many of  the indecomposable representations in $\mcR_\bfd(T)$ belong to one of a finite number of one-parameter families.
  \item \emph{wild} if for every finite dimensional algebra $A$, $A$-Mod is equivalent to a full subcategory of $\mcR(T)$.
 \end{itemize}
\end{defn}

 We note that in our definition of wild, it is true but not obvious that for every $n$, there is an $n$-parameter family of indecomposables. This follows from the first work on tame-wild dichotomy theorems by Drozd \cite{drozd1980tame}, whose work implied these dichotomies for quivers and finite dimensional algebras over algebraically closed fields. Gabriel famously classified the quivers of finite type, given by simply laced Dynkin diagrams \cite{gabriel1972unzerlegbare}.

Our goal in this section is to find a set of forbidden subdiagrams in the sense that any tensor diagram containing them will be wild. The first forbidden subdiagram is given by the following theorem. We shall make a few modifications which will give us the other forbidden subdiagrams.

\begin{theorem}[\cite{belitskii2003complexity}]\label{thm:trivalentwild}
 The problem of classifying the orbits of $\GL(m)\times \GL(n)\times\GL(q)$ acting on $\C^m\ot \C^n\ot \C^q$ is wild.
\end{theorem}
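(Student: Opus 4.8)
\emph{Proof proposal.} Write $T_3$ for the tensor diagram consisting of a single vertex with three (say, incoming) dangling wires, so that by Section~\ref{sec:repspace} a representation of $T_3$ is exactly a $3$-tensor $t\in\C^m\ot\C^n\ot\C^q$, a morphism is a triple of linear maps on the three wires intertwining two such tensors, and the isomorphism classes at a fixed dimension vector $(m,n,q)$ are the $\GL(m)\times\GL(n)\times\GL(q)$-orbits; thus the problem in the statement is precisely the problem of classifying $\mcR(T_3)$. The plan is to show $T_3$ is wild by exhibiting a representation embedding (a fully faithful exact functor)
$$\Phi\colon\ \C\langle x,y\rangle\textnormal{-Mod}\ \into\ \mcR(T_3),$$
and then invoking the classical fact that $\C\langle x,y\rangle$ is \emph{strictly} wild: for every finite-dimensional $\C$-algebra $A$ there is a full embedding $A\textnormal{-Mod}\into\C\langle x,y\rangle\textnormal{-Mod}$, because $A$ is a quotient of some $\C\langle x_1,\dots,x_r\rangle$ (giving a full embedding by restriction of scalars) and $\C\langle x_1,\dots,x_r\rangle\textnormal{-Mod}$ embeds fully into $\C\langle x,y\rangle\textnormal{-Mod}$ (this is classical; see e.g.\ \cite{drozd1980tame}). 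Composing $\Phi$ with such an embedding exhibits $A\textnormal{-Mod}$ as a full subcategory of $\mcR(T_3)$ for every $A$, which is the theorem; this is essentially the route taken in \cite{belitskii2003complexity}.

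\textbf{Step 1: from pairs of matrices to the three-arrow Kronecker quiver.} A module over $\C\langle x,y\rangle$ is a vector space $V$ with two endomorphisms $A,B$, morphisms being linear maps commuting with both. Let $K_3$ be the quiver with vertices $1,2$ joined by three parallel arrows, so that a representation is a triple $f_1,f_2,f_3\colon W_1\to W_2$ of linear maps. Define $\Phi_0(V;A,B)$ to be the $K_3$-representation with $W_1=V$, $W_2=V\oplus V$ and
$$f_1(v)=(v,0),\qquad f_2(v)=(0,v),\qquad f_3(v)=(Av,Bv).$$
If $(g_1,g_2)\colon\Phi_0(V;A,B)\to\Phi_0(V';A',B')$ is a morphism, then $g_2f_1=f_1'g_1$ forces the first block-column of $g_2$ to be $\bigl(\begin{smallmatrix}g_1\\0\end{smallmatrix}\bigr)$ and $g_2f_2=f_2'g_1$ the second to be $\bigl(\begin{smallmatrix}0\\g_1\end{smallmatrix}\bigr)$, so $g_2=g_1\oplus g_1$; then $g_2f_3=f_3'g_1$ reads $g_1A=A'g_1$ and $g_1B=B'g_1$. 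Hence $\Phi_0$ is fully faithful (it is clearly exact and additive), so $K_3$ is already strictly wild.

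\textbf{Step 2: absorbing the extra $\GL(3)$.} The only gap between $K_3$-representations and $\mcR(T_3)$ is that in $\mcR(T_3)$ there is an additional $\GL(3)$ acting by invertibly re-mixing the slices $f_1,f_2,f_3$ — equivalently, $\mcR(T_3)$ remembers only the subspace $\langle f_1,f_2,f_3\rangle\subseteq\Hom(W_1,W_2)$, not the tuple. To remove this ambiguity I would enlarge $\Phi_0$ to a $\Phi$ whose slices occupy rigidly distinguished positions: pad $W_1$ and $W_2$ with further copies of $V$ and insert identity ``anchor'' blocks so that the three slices have, say, pairwise distinct ranks (or span a prescribed flag of images), forcing any admissible re-mixing to be triangular and then, together with the anchors, trivial on the $(A,B)$-data. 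One then verifies that for this $\Phi$ the spaces $\Hom_{\mcR(T_3)}(\Phi M,\Phi N)$ reduce exactly to $\Hom_{\C\langle x,y\rangle}(M,N)$ — in particular the $\GL(3)$-component of any morphism is determined by the others and contributes nothing new — so that $\Phi$ is the desired representation embedding.

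\textbf{Expected main obstacle.} Step~1 is a routine quiver computation and the reduction $A\textnormal{-Mod}\into\C\langle x,y\rangle\textnormal{-Mod}$ is classical, so the whole weight of the proof rests on Step~2: designing the anchor configuration carefully enough that (i) the $3$-dimensional slice subspace pins down the $f_i$ up to the ``obvious'' symmetry, and (ii) no spurious intertwiners are created by the $\GL(3)$-direction. A crude dimension count — at dimension vector $(3,N,N)$ one has $\dim\mcR_{(3,N,N)}(T_3)=9N^2$ against $\dim\bigl(\GL(3)\times\GL(N)\times\GL(N)\bigr)=9+2N^2$, leaving $\sim 7N^2$ moduli — shows there is ample room for such an embedding, but converting this into the functorial statement is exactly the delicate matrix bookkeeping one borrows from \cite{belitskii2003complexity}.
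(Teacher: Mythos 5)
First, note that the paper does not prove this statement at all: it is imported wholesale from \cite{belitskii2003complexity}, and the only place the paper shows the underlying technique is in its adaptation for the needle (Theorem \ref{thm:needlewild}), where explicit block matrices $Y_1=X_1\oplus C_1$, $Y_2(A,B)=X_2\oplus C_2(A,B)$ are built and rank computations force the slice-mixing matrix $g\in\GL(2)$ to be trivial, after which commutation with $C_1$ pins $g$ down to a block-triangular form yielding $g_1A_2=A_1g_1$, $g_1B_2=B_1g_1$. Measured against that, your Step 1 is fine (the embedding $V\mapsto\bigl(V,V\oplus V;(v,0),(0,v),(Av,Bv)\bigr)$ and the block computation showing full faithfulness for the three-arrow Kronecker quiver are correct and standard), but Step 2 is a genuine gap, not a routine verification: the entire content of the cited theorem is precisely the passage from ordered triples of maps to a tensor in $\C^m\ot\C^n\ot\C^q$, i.e.\ neutralizing the extra $\GL(q)$ that re-mixes the slices. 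You describe a plan ("pad with identity anchor blocks so the slices have pairwise distinct ranks, forcing the re-mixing to be triangular and then trivial") but never exhibit the anchors or do the verification, and the final dimension count is not a substitute for it. This is exactly the delicate construction that \cite{belitskii2003complexity} (and the paper's needle proof) carries out explicitly.

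Two concrete points of caution if you try to complete Step 2. First, pairwise distinct ranks of $f_1,f_2,f_3$ do not by themselves force a mixing matrix to be triangular; what the Belitskii--Sergeichuk-style argument actually controls is the rank of every nontrivial linear combination $\sum_j g_{jk}f_j$ (compare the step in Theorem \ref{thm:needlewild} where $\rank(Y_1g_{11}+Y_2g_{21})=5n$ whenever both coefficients are nonzero, against $\rank(Y_1)=4n$ and $\rank(Y_2)\le 3n$), so your anchors must be designed so that mixed slices jump in rank, not merely so that the original slices have different ranks. Second, since the paper's definition of wild is categorical (a full embedding of $A$-Mod), your functor $\Phi$ must be fully faithful on \emph{all} morphisms of $\mcR(T_3)$, not just on isomorphisms; the rank argument as usually run only constrains invertible changes of basis, so you must separately check that the anchor configuration kills spurious non-invertible intertwiners (this is where "one then verifies" hides real work, since the wire maps $\varphi_1,\varphi_2,\varphi_3$ need not be invertible). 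Until the anchor matrices are written down and both checks are done, the proposal establishes wildness of the Kronecker quiver $K_3$, not of the stated $\GL(m)\times\GL(n)\times\GL(q)$ orbit problem.
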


Theorem \ref{thm:trivalentwild} states that the tensor diagram with a single vertex and three dangling wires is wild. From this, one would expect that any tensor diagram with a vertex of degree at least three is wild. Indeed, this will be the case, but will require some work to show. There also exist other forbidden subdiagrams that we need to prove are wild. Lastly we need to show that tensor diagrams with degree at most two are tame, and determine which of these are of finite type.

We first give a list of forbidden subdiagrams that, if in a tensor diagram, will force it to be wild. The list is given in Figure \ref{fig:forbiddensubdiagrams}. Theorem \ref{thm:trivalentwild} states that the open claw is wild. We shall need to prove that the needle and the figure eight are also wild. In fact, the underlying semi-graph of a tensor network is all that is needed to determine if it is of finite, tame, or wild type. So just as in the case of quivers, these properties are orientation independent. We prove this now.
\tikzstyle{block} = [rectangle, 
    text width=1em, anchor=west, minimum height=1em]
\begin{figure}
 \centering
 \subfigure[The open claw.]{
 \begin{tikzpicture}
  \draw[fill=black] (2,0) circle(.05);
  \draw[->] (2,0) -- (2.25,.375);
  \draw (2.25,.375) -- (2.5,.75);
  \draw[->] (2,0) -- (2.25,0);
  \draw (2.25,0) -- (2.5,0);
  \draw[->] (2,0) -- (2.25,-.375);
  \draw (2.25,-.375) -- (2.5,-.75);
  \draw[draw=none] (1,0) rectangle (4,1);
 \end{tikzpicture}
 }\qquad
 \subfigure[The needle.]{
 \begin{tikzpicture}
  \draw[fill=black] (0,0) circle(.05);
  \draw[->] (0,0) arc(270:90:.25);
  \draw (0,0) arc(-90:90:.25);
  \draw[->] (0,0) -- (0,-.25);
  \draw (0,-.25)--(0,-.5);
  \draw[draw=none] (-1,0) rectangle (1,1);
 \end{tikzpicture}
 }\qquad\qquad
 \subfigure[The figure eight.]{
 \begin{tikzpicture}
  \draw[fill=black] (0,0) circle(.05);
  \draw[->] (0,0) arc(270:90:.25);
  \draw (0,0) arc(-90:90:.25);
  \draw[->] (0,0) arc(90:-90:.25);
  \draw (0,0) arc(90:270:.25);
  \draw[draw=none] (-1.25,0) rectangle (1.25,1);
 \end{tikzpicture}
 }\caption{Forbidden subdiagrams of tame tensor diagrams. All other orientations of these tensor diagrams given forbidden subdiagrams.}\label{fig:forbiddensubdiagrams}
\end{figure}
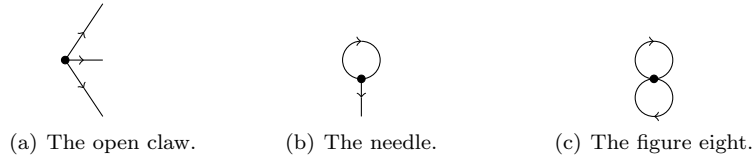

\begin{lemma}\label{lem:orientationindependence}
 Let $T$ and $T'$ be two tensor diagrams with the same underlying semi-graph but with the orientation on a single wire reversed. Then $\mcR(T)\cong \mcR(T')$.
\end{lemma}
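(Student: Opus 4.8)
## Proof Proposal

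The plan is to construct an explicit equivalence of categories by exploiting the compact closed structure — specifically, the cup and cap morphisms that allow us to bend a wire and thereby reverse its orientation. Let $e \in E$ be the single wire whose orientation differs between $T$ and $T'$, and suppose in $T$ it points from a vertex $u$ to a vertex $v$ (the cases where $e$ is dangling, or a loop, are handled the same way with minor bookkeeping). The key observation is that reversing the arrow on $e$ amounts to replacing the vector space $R(e)$ by its dual $R(e)^*$ everywhere it appears, and transposing (more precisely, dualizing) the relevant legs of the matrices $R(u)$ and $R(v)$. On a fixed dimension vector this is harmless because $k^{d_e} \cong (k^{d_e})^*$, but to get an equivalence of the whole abelian category $\mcR(T)$ we should do it functorially.

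First I would define a functor $\Theta : \mcR(T) \to \mcR(T')$ on objects by setting $\Theta R(e') = R(e')$ for all wires $e' \neq e$, and $\Theta R(e) = R(e)^*$; for each vertex $w \notin \{u,v\}$ set $\Theta R(w) = R(w)$, while for $w \in \{u,v\}$ define $\Theta R(w)$ by composing $R(w)$ with the canonical identification $\Hom(-\ot R(e), \,-) \cong \Hom(-,\,-\ot R(e)^*)$ coming from the compact closed structure (cup/cap on $R(e)$). On morphisms, $\Theta$ sends a collection $(\varphi_{e'})_{e' \in E}$ to the collection that agrees with $\varphi_{e'}$ for $e' \neq e$ and replaces $\varphi_e$ by its inverse-transpose $(\varphi_e^{-1})^T : R_1(e)^* \to R_2(e)^*$. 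Here I would need to check two things: (i) that the new commuting squares at $u$ and $v$ actually commute — this is a diagram chase using naturality of the cup/cap morphisms, i.e. the "yanking" identities of a compact closed category — and (ii) that inverting $\varphi_e$ is forced, because a morphism of representations must be an isomorphism on each wire for the construction to respect composition; but in an abelian category morphisms need not be isomorphisms on wires, so I must instead use the duality more carefully: $\Theta$ should send $\varphi_e$ to $\varphi_e^T$ reading the arrow the other way, which reverses the direction of $\varphi_e$ only, not of $\Phi$. I will need to be careful that $\Theta$ as a whole is still a covariant functor $\mcR(T) \to \mcR(T')$ and not a contravariant one.

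The cleanest way around the subtlety in the previous paragraph is to mimic the duality construction already carried out in the paper for $R \mapsto R^*$: there, reversing \emph{all} arrows gave a contravariant functor. Reversing a \emph{single} arrow should, by contrast, be made covariant by only dualizing the vector space on that one wire and leaving the "flow" of morphisms intact; concretely, a morphism $\Phi = (\varphi_{e'})$ in $\mcR(T)$ induces $\Theta\Phi$ whose component on $e$ is the map $R_1(e)^* \to R_2(e)^*$ that one gets by \emph{not} transposing but rather observing that $\varphi_e : R_1(e) \to R_2(e)$ does not directly give a map on duals in the right direction — so in fact $\Theta$ can only be defined on the \emph{full subcategory of isomorphisms}, which is not enough. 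This tells me the honest statement to prove is at the level of the groupoid of representations of a fixed dimension vector together with all $\GLd$-orbit data, or else I should define the equivalence differently: send $R(e) \mapsto R(e)$ still but record the wire as carrying the opposite orientation, using that in $\fdVect_k$ every space is canonically (after choosing a basis) self-dual. Since the paper explicitly allows "a choice of basis" in the definition of the dual representation, I will adopt the same convention: fix bases, use $k^{d_e}$ on wire $e$ in both $T$ and $T'$, and define $\Theta$ on a vertex matrix by literally transposing the $e$-leg (i.e. moving that tensor factor from domain to codomain or vice versa), and on morphisms by transposing $\varphi_e$ and leaving the rest alone — checking it is an equivalence then reduces to checking that transposition is a bijection compatible with composition and that the commuting squares transform correctly, which is again the yanking identity.

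The main obstacle, then, is \textbf{variance}: naively, "reverse one arrow" wants to be half-contravariant, and one must either (a) show that, after choosing bases, the transposition bijection $M \mapsto M^T$ on the $e$-leg assembles into an honest covariant functor because the ambient morphisms $\varphi_{e'}$, $e' \neq e$, are untouched and $\varphi_e \mapsto \varphi_e^T$ can be composed correctly once we also swap source and target of representations appropriately — or (b) prove the statement as an equivalence $\mcR(T) \simeq \mcR(T')$ by exhibiting $\Theta$ and $\Theta'$ (doing the same reversal back) and natural isomorphisms $\Theta'\Theta \cong \id$, $\Theta\Theta' \cong \id$ built from the double-dual isomorphism $R(e)^{**} \cong R(e)$. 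Once the variance bookkeeping is pinned down, everything else — that $\Theta$ preserves direct sums, kernels, cokernels, and short exact sequences — is immediate because $\Theta$ is the identity on all wires and vertices except the two endpoints of $e$, where it is an invertible linear change of presentation. I expect the write-up to be short: set up $\Theta$, invoke the compact closed yanking identity for the two affected commuting squares, note $\Theta$ and $\Theta'$ are mutually inverse, done.
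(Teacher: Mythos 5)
Your object-level construction coincides with the paper's: fix a basis identification $\psi\colon R(e)\cong R(e)^*$ and use it to move the $e$-leg of the two endpoint tensors from the input side to the output side, leaving every other wire and vertex untouched. The genuine gap is at the level of morphisms, and your own text shows you sensed it but never closed it. Your final prescription --- keep $\varphi_{e'}$ for $e'\neq e$ and send $\varphi_e\mapsto\varphi_e^T$ --- does not typecheck: after the basis identification, $\varphi_e^T$ is a map $R_2(e)\to R_1(e)$, i.e.\ it points the wrong way to be the $e$-component of a morphism $\Theta R_1\to\Theta R_2$, so the collection you write down is not a morphism of $\mcR(T')$ at all. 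The other candidate, keeping $\varphi_e$ unchanged, fails the commuting square at the endpoints of $e$: for a vertex with input $e$ and output $u$ the condition in $\mcR(T)$ is $\varphi_u R_1(v)=R_2(v)\varphi_e$, whereas after flipping $e$ the condition in $\mcR(T')$ becomes $(\varphi_u\ot\varphi_e)\bigl(\widetilde{R_1(v)}\bigr)=\widetilde{R_2(v)}$, i.e.\ $\varphi_u R_1(v)\varphi_e^{T}=R_2(v)$, and the two agree only when $\varphi_e\varphi_e^T=\id$. Invoking naturality of cups and caps (``yanking'') cannot repair this, because the cup on $R(e)$ is natural with respect to $\varphi_e\ot(\varphi_e^{-1})^T$, which is exactly the inverse-transpose option you already (correctly) discarded since morphism components need not be invertible.

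What your construction does establish --- and this is also all that the paper's own proof, which is equally silent about morphisms, actually establishes and all that the rest of the paper uses --- is that for each dimension vector $\bfd$ the flip is a linear bijection $\mcR_\bfd(T)\to\mcR_\bfd(T')$ intertwining the two $\GL_{\bfd}$-actions via the automorphism replacing the $e$-th factor $g_e$ by $\psi(g_e^{-1})^T\psi^{-1}$; hence it induces a bijection of isomorphism classes compatible with direct sums, which suffices for the finite/tame/wild analysis. Be aware that the morphism-level claim cannot be rescued by a cleverer covariant functor: already for a single vertex with one dangling wire, $\mcR(T)$ and $\mcR(T')$ are the categories of co-pointed and pointed vector spaces, and comparing endomorphism monoids of objects admitting an empty Hom-set to some target shows these are not equivalent. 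So the honest fix is to state and prove the lemma at the level of the groupoids of representations (where $\varphi_e\mapsto\psi_2(\varphi_e^{-1})^T\psi_1^{-1}$ is legitimate), equivalently at the level of $\GL_{\bfd}$-orbits; as written, the covariant equivalence $\Theta$ on all of $\mcR(T)$ is not constructed, and that is the missing step.
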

\begin{proof}
 We construct a map $\varphi:\mcR(T)\to\mcR(T')$ sending $R(T)\in\mcR(T)$ to $R_\varphi(T')\in\mcR(T')$. Let $T=(V,E)$ and $T'=(V,E')$ where $E'$ is obtained from $E$ by reversing the orientation on a single wire, call it $w$. Then for $e\in E'\setminus\{w\}$, we define $R_\varphi(e)=R(e)$. For all $v\in V$ not incident to $w$, we define $R_\varphi(v)=R(v)$. Now we choose a basis for $R(w)$, which induces an isomorphism $\psi:R(w)\cong R(w)^*$. Suppose $w\in\rhu{N}(v)$ for some $v\in V$. Then $\psi$ induces an isomorphism
 $$\tilde{\psi}:R(w)^*\ot\ott_{i\in\rhu{N}(v)\setminus\{w\}}{R(i)^*}\ot\ott_{j\in\lhu{N}(v)}{R(j)}\cong $$
 $$\ott_{i\in\rhu{N}(v)\setminus\{w\}}{R(i)^*}\ot\ott_{j\in\lhu{N}(v)}{R(j)}\ot R(w).$$ We then define $R_\varphi(v)=\tilde{\psi}(R(v))$. We construct a similar isomorphism $\hat{\psi}$ if $w\in\lhu{N}(v)$ and then define $R_\varphi(v)=\hat{\psi}(R(v))$. The map $\varphi$ clearly gives an isomorphism $\mcR(T)\cong\mcR(T')$.
\end{proof}

\subsection{Wildness of the needle and figure eight}

We first focus on showing the wildness of the needle given in Figure \ref{fig:forbiddensubdiagrams} (b) noting that the tensor diagram given by any other orientation on the needle will also be wild by Lemma \ref{lem:orientationindependence}. We denote the needle by $N=(V,E)$ where $V=\{v\}$ and $E=\{e_1,e_2\}$ where $e_1$ is the loop and $e_2$ is the dangling wire. The dimension vector $\bfd=(d_1,d_2)$ for a representation $R(N)$ means that $\dim(R(e_i))=d_i$ for $i=1,2$.

A representation $R(N)$ with dimension vector $(d_1,d_2)$ is a single tensor $\sum_{i=1}^{d_2}{M_i\ot u_i}$ for $M_i\in \End(\C^{d_1})$ and $u_i\in \C^{d_2}$. The group $\GL(N)$ acts on $R(N)$ as follows: for $g=(g_1,g_2)\in\GL(d_1)\times \GL(d_2)$, $$g.\sum_{i=1}^{d_2}{M_i\ot u_i}=\sum_{i=1}^{d_2}{g_1M_ig_1^{-1}\ot g_2u_i}.$$

\begin{lemma}\label{lem:needlematrixprob}
 The problem of determining if two representations $R_1(N),R_2(N)\in\mcR_\bfd(N)$, $\bfd=(d_1,d_2)$, are isomorphic is equivalent to determining if two $d_2$-tuples of $d_1\times d_1$ matrices $(A_1,\dots,A_{d_2})\sim(B_1,\dots,B_{d_2})$ under the following equivalence relation:
 \begin{enumerate}[(a)]
  \item For $g\in \GL(d_1)$, $(M_1,\dots,M_{d_2})\sim (gM_1g^{-1},\dots,gM_{d_2}g^{-1})$.
  \item For $g=\{g_{jk}\}\in \GL(d_2)$, $(M_1,\dots,M_{d_2})\sim (\sum_{j=1}^{d_2}{M_{j}g_{j1}},\dots,\sum_{j=1}^{d_2}{M_{j}g_{jd_2}})$.
 \end{enumerate}
\end{lemma}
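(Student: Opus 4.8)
The plan is to unwind the two definitions and match them literally. A representation $R(N)$ with dimension vector $\bfd=(d_1,d_2)$ is the single tensor $T=\sum_{i=1}^{d_2}M_i\ot u_i\in\End(\C^{d_1})\ot\C^{d_2}$ attached to the vertex $v$, and by the discussion in Section~\ref{sec:repspace} the isomorphism classes of such representations are exactly the orbits of $\GL(N)=\GL(d_1)\times\GL(d_2)$ acting by $g.T=\sum_i g_1 M_i g_1^{-1}\ot g_2 u_i$. So first I would fix the standard basis $e_1,\dots,e_{d_2}$ of $\C^{d_2}$ and expand $T=\sum_{k=1}^{d_2}A_k\ot e_k$ with $A_k\in\End(\C^{d_1})$; this coordinatization is a bijection between tensors in $\End(\C^{d_1})\ot\C^{d_2}$ and $d_2$-tuples $(A_1,\dots,A_{d_2})$ of $d_1\times d_1$ matrices, and it is precisely the identification used to pass from $R(N)$ to the tuple.

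Next I would transport the group action across this bijection. The factor $g_1\in\GL(d_1)$ acts on each slot of $\End(\C^{d_1})$ by conjugation and leaves $\C^{d_2}$ alone, so it sends $(A_1,\dots,A_{d_2})$ to $(g_1 A_1 g_1^{-1},\dots,g_1 A_{d_2}g_1^{-1})$, which is relation (a). The factor $g_2=\{g_{jk}\}\in\GL(d_2)$ acts only on the $\C^{d_2}$ slot: writing $g_2 e_j=\sum_k g_{kj} e_k$ (the matrix of $g_2$ in the standard basis, with appropriate care about whether one uses $g_2$ or its transpose/inverse — this is the one bookkeeping point to pin down), we get $g_2.\big(\sum_j A_j\ot e_j\big)=\sum_j A_j\ot g_2 e_j=\sum_k\big(\sum_j A_j g_{kj}\big)\ot e_k$, so the new $k$-th matrix is $\sum_j A_j g_{kj}$, and since $\GL(d_2)$ is closed under transpose-inverse this is exactly relation (b) after renaming $g$. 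Because $\GL(d_1)$ and $\GL(d_2)$ act independently and together generate $\GL(N)$, the orbit equivalence on tensors is generated by (a) and (b) on tuples, giving the claimed equivalence.

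Finally I would record that $R_1(N)\cong R_2(N)$ iff the corresponding tuples lie in the same $\GL(N)$-orbit (Section~\ref{sec:repspace}), iff the tuples are related by a composition of moves of type (a) and (b), i.e.\ $(A_1,\dots,A_{d_2})\sim(B_1,\dots,B_{d_2})$. There is essentially no obstacle here; the only thing that needs care is the direction of the linear change of coordinates in part (b) — whether the entries should be indexed as $g_{jk}$ or $g_{kj}$ — which is settled by noting that $\GL(d_2)$ is invariant under $g\mapsto (g^{T})^{-1}$, so the precise convention is immaterial for the statement of the equivalence relation. The content of the lemma is thus purely a translation between the language of tensor networks on the needle and the classical language of tuples of matrices up to simultaneous conjugation together with a linear change of the indexing space.
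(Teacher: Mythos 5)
Your proposal is correct and follows essentially the same route as the paper: expand the tensor $\sum_i M_i\ot u_i$ in the standard basis of $\C^{d_2}$ to get the tuple $(M_1,\dots,M_{d_2})$, then track the actions of the two factors $(g,1)$ and $(1,g)$ of $\GL(d_1)\times\GL(d_2)$ separately, obtaining relations (a) and (b) respectively. Your extra remark that the $g_{jk}$ versus $g_{kj}$ indexing is immaterial because $\GL(d_2)$ is stable under transposition is a harmless refinement of the same argument, not a different approach.
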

\begin{proof}
 Given $R(N)=\sum_{i=1}^{d_2}{M_i\ot u_i}$, we associate it to the tuple $(M_1,\dots,M_{d_2})$. Call this map $\psi$. The action $(g,1)\in\GL(N)$ takes $\sum_{i=1}^{d_2}{M_i\ot u_i}$ to $\sum_{i=1}^{d_2}{gM_ig^{-1}\ot u_i}$ which $\psi$ maps to $(gM_1g^{-1},\dots,gM_{d_2}g^{-1})$. The action $(1,g)\in\GL(N)$, with $g=\{g_{jk}\}$, takes $\sum_{i=1}^{d_2}{M_i\ot u_i}$ to $\sum_{i=1}^{d_2}{M_i\ot gu_i}$ which $\psi$ maps to $$(\sum_{j=1}^{d_2}{M_{j}g_{j1}},\dots,\sum_{j=1}^{d_2}{M_{j}g_{jd_2}}).$$
\end{proof}

We slightly modify the proof of Theorem \ref{thm:trivalentwild} given in \cite{belitskii2003complexity} to prove that for sufficiently large $d_1$, the problem of simultaneous similarity of two $n\times n$ matrices can be embedded into the problem of determining isomorphism classes of $R(N)$ with dimension vector $(d_1,2)$. We can do this for every $n$, and so determining the isomorphism classes of $R(N)$ is as difficult as determining simultaneous similarity of two $n\times n$ matrices, which is a wild problem.

\begin{theorem}\label{thm:needlewild}
 The needle is wild.
\end{theorem}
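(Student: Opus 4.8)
The plan is to reduce the classification of isomorphism classes of $R(N)$ to the problem of simultaneous similarity of two $n \times n$ matrices, which is wild by the same circle of ideas behind Theorem \ref{thm:trivalentwild} (Belitskii--Sergeichuk). By Lemma \ref{lem:needlematrixprob}, a representation of the needle with dimension vector $(d_1, 2)$ is (up to isomorphism) a pair $(M_1, M_2)$ of $d_1 \times d_1$ matrices, considered modulo (a) simultaneous conjugation $(M_1, M_2) \mapsto (g M_1 g^{-1}, g M_2 g^{-1})$ for $g \in \GL(d_1)$, and (b) the substitution $(M_1, M_2) \mapsto (a M_1 + b M_2,\, c M_1 + d M_2)$ for an invertible $2\times 2$ matrix $\left(\begin{smallmatrix} a & b \\ c & d\end{smallmatrix}\right)$, i.e.\ an arbitrary change of basis in the $2$-dimensional ``coefficient'' space. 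The second move is the only new feature compared with the classical pair-of-matrices problem, so the whole point is to rigidify it away.

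The key steps, in order, are as follows. First, given an instance $(A, B)$ of simultaneous similarity of $n \times n$ matrices (the target wild problem), I would build from it a pair $(M_1, M_2)$ of $d_1 \times d_1$ matrices, with $d_1$ a fixed affine function of $n$, using a block/padding construction: place $A$ and $B$ in designated blocks of $M_1$ and $M_2$, and fill the remaining blocks with fixed ``marker'' matrices (e.g.\ nilpotent Jordan-type blocks of distinct sizes, identity blocks, and zeros) chosen so that $M_1$ and $M_2$ play visibly asymmetric and non-interchangeable roles. Second, I would verify functoriality/faithfulness on the conjugation part: two instances $(A,B)$, $(A',B')$ are simultaneously similar if and only if the padded pairs are simultaneously conjugate, which is the standard kind of verification in these reductions (the markers force the conjugating $g$ to be block-diagonal and to act as the identity on the marker blocks, hence to restrict to a single $g_0 \in \GL(n)$ conjugating $A \to A'$ and $B \to B'$ simultaneously). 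Third — and this is the genuinely new step — I would show that the coefficient-space move (b) cannot collapse distinct padded pairs: because $M_1$ and $M_2$ carry distinguishable markers (say $M_1$'s marker block is invertible while $M_2$'s is nilpotent, or their spectra/ranks differ in a way preserved only by the identity linear combination), any nontrivial combination $a M_1 + b M_2$ destroys the marker structure, so the only elements of $\GL(2)$ that send a padded pair to another padded pair are the scalars $\left(\begin{smallmatrix}\lambda & 0\\ 0 & \mu\end{smallmatrix}\right)$, which are then absorbed into the conjugation action (or killed outright if the markers also pin down the scale). Finally, assembling these, the map $(A,B) \mapsto \sum M_i \otimes u_i$ (with $u_1, u_2$ a basis of $\C^2$) is a full and faithful embedding of the category (or at least a reduction of the orbit problem) witnessing wildness; combined with Lemma \ref{lem:orientationindependence}, every orientation of the needle is wild.

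I expect the main obstacle to be step three: designing the marker blocks so that they simultaneously (i) survive and rigidify simultaneous conjugation, and (ii) are not interchangeable or mixable under $\GL(2)$ acting on the pair. These two requirements pull in slightly different directions, since conjugation-invariance is about the $\GL(d_1)$-orbit while $\GL(2)$-rigidity is about linear combinations of $M_1$ and $M_2$ as matrices; one clean way to get both is to arrange that $M_1$ is invertible with a prescribed characteristic polynomial having no multiple roots shared with anything in $M_2$'s support, while $M_2$ is nilpotent, so that $a M_1 + b M_2$ is similar to $M_1$'s marker only when $b = 0$, and invertible only when $a \neq 0$. A secondary, more bookkeeping-type obstacle is checking that the construction works uniformly in $n$ with $d_1$ bounded by $O(n)$ and that it respects morphisms, not just objects, so that we really get an embedding of $A\text{-Mod}$ for the wild algebra $A$ controlling the pair-of-matrices problem, rather than merely a reduction of isomorphism classes — but this parallels the treatment in \cite{belitskii2003complexity} closely enough that it should go through with the markers in place.
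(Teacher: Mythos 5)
Your proposal takes essentially the same route as the paper: it reduces via Lemma \ref{lem:needlematrixprob} to pairs of matrices modulo simultaneous conjugation together with a $\GL(2)$ mixing action, then pads an instance $(A,B)$ of simultaneous similarity with asymmetric marker blocks whose rank and spectral data force the $\GL(2)$ element to be trivial and whose shift-type block forces the conjugating matrix to restrict to a single intertwiner of $(A_1,B_1)$ with $(A_2,B_2)$. The paper's explicit matrices $Y_1=X_1\oplus C_1$ and $Y_2(A,B)=X_2\oplus C_2(A,B)$ are precisely such markers (with the $I_n$ blocks pinning the $\GL(2)$ scalars to $1$ via similarity, as your ``killed outright'' fallback requires, rather than absorbing them into conjugation), so your sketch matches the paper's argument up to the concrete choice of blocks.
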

\begin{proof}
 Let us first start with a two pairs $n\times n$ matrices $(A_1,B_1)$ and $(A_2,B_2)$. We want to find two representations $R_1(N)$ and $R_2(N)$ that are isomorphic if and only if $(A_1,A_2)$ is equivalent to $(B_1,B_2)$ under simultaneous similarity. Given a pair of $n\times n$ matrices $(A,B)$, we associate to the the pair of matrices $(Y_1,Y_2(A,B)):=(X_1\op C_1,X_2\op C_2(A,B))$ where
 \begin{equation*}
  X_1=\begin{pmatrix}
   I_n&0\\
   0&0
  \end{pmatrix},\quad
  X_2=\begin{pmatrix}
0&0\\
0&I_n
\end{pmatrix},
 \end{equation*}
\begin{equation*}
C_1= \begin{pmatrix}
  0& & &\\
  I_n&0& &\\
  0&I_n&0&\\
  0&0&I_n&0
 \end{pmatrix},\quad
 C_2(A,B)=\begin{pmatrix}
  0& & & \\
  0&0& &\\
  A&0&0& \\
  0&B&0&0
 \end{pmatrix}
\end{equation*}

Now suppose that $(Y_1,Y_2(A_1,B_1))\sim (Y_1,Y_2(A_2,B_2))$ under the equivalence defined in Lemma \ref{lem:needlematrixprob}. Then there exists a matrix in $g=\{g_{jk}\}\in\GL(2)$ such that 
$$(Y_1g_{11}+Y_2(A_1,B_1)g_{21},Y_1g_{12}+Y_2(A_1,B_1)g_{22})$$ is simultaneously similar to $(Y_1,Y_2(A_2,B_2))$. Note that this implies that $\rank(Y_1g_{11}+Y_2(A_1,B_1)g_{21})$ is equal to $\rank(Y_1)$ and similarly $\rank(Y_1g_{12}+Y_2(A_1,B_1)g_{22})=\rank(Y_2(A_2,B_2))$. But $\rank(Y_1g_{11}+Y_2(A_1,B_1)g_{21})=5n>Y_1$ if both $g_{11},g_{21}\ne 0$. Furthermore $\rank(Y_1)=4n>3n\ge \rank(Y_2(A_1,B_1))$, so $g_{21}=0$ and $g_{11}\ne 0$.

Next we note that  $\rank(Y_1g_{12}+Y_2(A_1,B_1)g_{22})>\rank(Y_2(A_2,B_2))$ if both $g_{12},g_{22}\ne 0$. If $g_{22}=0$, then $g$ was singular, so we conclude that $g_{12}=0$ and $g_{22}\ne 0$. So we see that $g_{11}Y_1$ is similar to $Y_1$, implying that $g_{11}=1$, and that $g_{22}Y_2(A_1,B_1)$ is similar to $Y_2(A_2,B_2)$, implying that $g_{22}=1$.

So we have that $(Y_1,Y_2(A_1,B_1))$ must be simultaneously similar to $(Y_1,Y_2(A_2,B_2))$. This implies that $(C_1,C_2(A_1,B_1))$ and $(C_1,C_2(A_2,B_2))$ are simultaneously similar. Let $g\in\GL(4n)$ be such that $gC_1=C_1g$ and $gC_2(A_1,B_1)=C_2(A_2,B_2)g$. The first equation implies that 
\begin{equation*}
 g=\begin{pmatrix}
    g_1&  & &\\
    g_2& g_1& &\\
    g_3&g_2&g_1&\\
    g_4&g_3&g_2&g_1
   \end{pmatrix}
\end{equation*}
The second equation implies that $A_1g_1=g_1A_2$ and $B_1g_1=g_1B_2$.
\end{proof}

We now look to the figure eight. The proof that this diagram is wild is also very similar to the needle and the open claw. Let $B$ denote the figure eight. We only consider dimension vectors of the form $(d_1,2)$ again and show that there is an embedding of simultaneous similarity of $n\times n$ matrices into the representations of $B$. Given a representation $R(B)$ with dimension vector $(d_1,d_2)$, we note that it is a tensor of the form $\sum_{i=1}^{d_2}{M_i\ot E_i}$ where $M_i\in\End(\C^{d_1})$ and $E_i\in\End(\C^{d_2})$ are the elementary basis matrices.

\begin{lemma}\label{lem:8matrixproblem}
 The problem of determining if two representations $R_1(B),R_2(B)\in\mcR_\bfd(N)$, $\bfd=(d_1,d_2)$, are isomorphic is equivalent to determining if two $d_2$-tuples of $d_1\times d_1$ matrices $(A_1,\dots,A_{d_2})\sim(B_1,\dots,B_{d_2})$ under the following equivalence relation:
 \begin{enumerate}[(a)]
 \item For $g\in \GL(d_1)$, $(M_1,\dots,M_{d_2^2})\sim (gM_1g^{-1},\dots,gM_{d_2^2}g^{-1})$.
 \item For $h=g\ot (g^{-1})^T=\{h_{jk}\}$, $g\in \GL(d_2)$, $$(M_1,\dots,M_{d_2^2})\sim (\sum_{j=1}^{d_2^2}{M_{j}h_{j1}},\dots,\sum_{j=1}^{d_2^2}{M_{j}h_{jd_2^2}}).$$
 \end{enumerate}
\end{lemma}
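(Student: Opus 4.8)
The plan is to mimic the proofs of Lemma \ref{lem:needlematrixprob} and Theorem \ref{thm:needlewild}, tracking carefully how the group action on the loop wire of the figure eight differs from that on the loop of the needle. A representation $R(B)$ with dimension vector $(d_1,d_2)$ assigns to the dangling-free loop wire $e_2$ the space $\C^{d_2}$ and to the vertex $v$ a matrix in $\Hom\big(\C^{d_1}\ot\C^{d_2},\C^{d_1}\ot\C^{d_2}\big)$, since the loop $e_2$ is both incoming and outgoing at $v$ (here $e_1$ is again the wire carrying $\C^{d_1}$; I will keep the labelling consistent with the paragraph preceding the lemma, where the roles of $d_1$ and $d_2$ are as in the displayed tensor $\sum_i M_i\ot E_i$). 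Writing such a matrix in the basis of elementary matrices $E_i$ on the $\C^{d_2}$ factor gives the canonical form $\sum_{i=1}^{d_2^2} M_i\ot E_i$ with $M_i\in\End(\C^{d_1})$, which is the tuple $(M_1,\dots,M_{d_2^2})$ associated to the representation.

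First I would work out the two pieces of the $\GL(B)=\GL(d_1)\times\GL(d_2)$-action. The factor $\GL(d_1)$ acts on the $\C^{d_1}$ wire; since that wire enters $v$ on both sides (it is the dangling wire — wait, in the figure eight every wire is a loop, so actually \emph{both} wires are loops), the vertex matrix gets conjugated, so acting by $(g,1)$ sends $\sum_i M_i\ot E_i$ to $\sum_i gM_ig^{-1}\ot E_i$, giving relation (a). The factor $\GL(d_2)$ acts on the $\C^{d_2}$-loop on both sides as well, so acting by $(1,g)$ sends $M\ot E$ to $M\ot (gEg^{-1})$; under the identification $\End(\C^{d_2})\cong\C^{d_2}\ot(\C^{d_2})^*$ the map $E\mapsto gEg^{-1}$ is exactly $g\ot(g^{-1})^T$, which is the matrix $h=\{h_{jk}\}$ in the statement, yielding relation (b). So the proof of Lemma \ref{lem:8matrixproblem} is the direct analogue of Lemma \ref{lem:needlematrixprob}: define $\psi$ sending a representation to its tuple of coordinate matrices, and check the two generators of the group act as claimed; this is a short computation once the conjugation-vs-multiplication bookkeeping on the second loop is pinned down.

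Next, to prove wildness I would reduce simultaneous similarity of pairs of $n\times n$ matrices to isomorphism of figure-eight representations with dimension vector $(d_1,2)$, exactly as in Theorem \ref{thm:needlewild}. The only subtlety is that relation (b) now involves $h=g\ot(g^{-1})^T$ for $g\in\GL(2)$, i.e. a $4$-dimensional action on the $4$-tuple $(M_1,M_2,M_3,M_4)$ rather than an arbitrary $\GL(2)$ on a $2$-tuple. I would again build block matrices $Y_i = X_i\op C_i$ (with the $X_i$ rank-detecting "anchor" blocks and the $C_i$ carrying the pair $(A,B)$ of matrices being classified via the off-diagonal shift/commutation structure used in \ref{thm:needlewild}), arranged so that a rank argument forces the transformation in relation (b) to be (up to scalar) the image of a diagonal $g\in\GL(2)$, hence essentially trivial, and that any surviving freedom in relation (a) forces $A_1,B_1$ to be simultaneously similar to $A_2,B_2$. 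Because $h=g\ot(g^{-1})^T$ for diagonal $g=\mathrm{diag}(a,b)$ is $\mathrm{diag}(1, a/b, b/a, 1)$, the scaling factors are reciprocal pairs, so the same "if both entries nonzero the rank jumps" trick pins down which coordinates can mix; I would choose the ranks of the anchor blocks $X_i$ to separate the four coordinates accordingly. I expect the main obstacle to be precisely this rank-calibration: designing the $X_i$ and $C_i$ so that the constrained $g\ot(g^{-1})^T$-action (which, unlike the free $\GL(2)$ in the needle case, can rescale coordinates but in a correlated way) still collapses to the identity on the block carrying $(A,B)$, and then verifying that the residual $\GL(d_1)$-conjugation yields exactly simultaneous similarity of $(A_1,B_1)$ with $(A_2,B_2)$. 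Once that is set up, the conclusion that the figure eight is wild follows, since simultaneous similarity of pairs of matrices is wild and the construction works for every $n$.
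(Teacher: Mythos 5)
Your first two paragraphs reproduce the paper's own proof of this lemma: the map $\psi$ sending $\sum_i M_i\ot E_i$ to its tuple of coordinate matrices, conjugation by $\GL(d_1)$ giving relation (a), and the identification $\End(\C^{d_2})\cong\C^{d_2}\ot(\C^{d_2})^*$ under which $E\mapsto gEg^{-1}$ becomes $g\ot(g^{-1})^T$ giving relation (b) --- so the argument is correct and essentially identical. Your third paragraph addresses the subsequent wildness statement (Proposition \ref{prop:8wild}), not this lemma; for what it is worth, the paper sidesteps the rank-calibration subtlety you anticipate simply by padding the needle construction to the $4$-tuples $(Y_1,Y_2(A,B),0,0)$ and reusing the argument of Theorem \ref{thm:needlewild}.
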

\begin{proof}
 We construct a map $\psi$ that takes $\sum_{i=1}^{d_2}{M_i\ot E_i}$ to the tuple $(M_1,\dots, M_{d_2^2})$. Part (a) follows the exact same lines as in Lemma \ref{lem:needlematrixprob}. For part (b), we note that $\End(\C^{d^2})\cong \C^{d_2}\ot \C^{d_2}$ and that the induced action of $\GL(d_2)$ on this space is by $g\ot (g^{-1})^T$. Then part (b) follows along the same lines as in Lemma \ref{lem:needlematrixprob}.
\end{proof}

\begin{proposition}\label{prop:8wild}
 The figure eight is wild.
\end{proposition}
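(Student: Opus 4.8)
The plan is to mimic the proof of Theorem~\ref{thm:needlewild} almost verbatim, with the single change coming from the different shape of the group action in part (b) of Lemma~\ref{lem:8matrixproblem}. Concretely, I would again reduce to the case of dimension vector $(d_1,2)$, so that a representation of the figure eight is encoded (via $\psi$) by a tuple $(M_1,M_2,M_3,M_4)$ of $d_1\times d_1$ matrices, with $\GL(d_1)$ acting by simultaneous conjugation and $\GL(2)$ acting via $h=g\ot(g^{-1})^T\in\GL(4)$ on the index set $\{1,2,3,4\}\cong\{(1,1),(1,2),(2,1),(2,2)\}$. Given a pair $(A,B)$ of $n\times n$ matrices I would build a block tuple $(Y_1,Y_2,Y_3,Y_4)=(X_1\op C_1,\dots)$ out of large rank-padding blocks $X_i$ together with shift-type blocks $C_i$ encoding $A$ and $B$, designed so that the ranks of the $X_i$ (and of generic linear combinations of them) are pairwise distinct and large enough that any $h$ carrying one such tuple to another must be forced into a very restricted form. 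Then I would run the same rank-counting argument: a nontrivial mixing of the $Y_i$ under $h$ would inflate ranks beyond what is allowed, so $h$ must act (on the padding blocks) essentially as a permutation/scaling that is in turn pinned down to the identity, after which simultaneous similarity of the $(C_i)$-part forces $Ag_1=g_1A'$, $Bg_1=g_1B'$. Since simultaneous similarity of two $n\times n$ matrices is wild and this works for every $n$, the figure eight is wild.

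The first concrete step is to understand the $\GL(2)$-action explicitly: writing $g=\begin{pmatrix} a&b\\ c&d\end{pmatrix}$, the matrix $h=g\ot(g^{-1})^T$ acts on $(M_{(1,1)},M_{(1,2)},M_{(2,1)},M_{(2,2)})$, and I would record exactly which $4\times 4$ matrices $h$ arise — this is the image of $\GL(2)$ under $g\mapsto g\ot(g^{-1})^T$, which factors through $\PGL(2)$ and has a well-understood orbit structure on $2\times2$ matrices (conjugation twisted by transpose-inverse, i.e. the adjoint-type representation). The key point I need is that, up to this action, I can still carve out a "diagonal-like" sub-locus on which the action is by conjugation only, analogous to how the $g_{11}=g_{22}=1$, $g_{12}=g_{21}=0$ conclusion was reached for the needle. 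The rank obstructions are what do this: I will choose the $X_i$ so that $\rank(X_i)$ are, say, $4n,5n,6n,7n$ or some strictly increasing sequence with gaps exceeding $3n$, and so that any linear combination $\sum h_{ji}Y_j$ with more than one nonzero coefficient among the "big" blocks has rank strictly larger than any single $\rank(Y_k)$; this forces the $4\times4$ matrix $h$ to be monomial (one nonzero entry per row and column), and then matching which $Y_k$ maps to which pins down the permutation, and matching ranks of the conjugated $X$-blocks pins the scalars to $1$.

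The main obstacle I anticipate is precisely verifying that the monomial $h$'s that survive the rank test actually form a single well-controlled family, and that within the image of $g\mapsto g\ot(g^{-1})^T$ the only monomial element forcing all scalars to be $1$ is $h=I_4$ — because the figure-eight action is a genuine $4$-dimensional index set with a nontrivially structured group image, rather than the transparent "relabel two columns" action of the needle. I would handle this by noting that $g\ot(g^{-1})^T$ is monomial in the standard basis of $\Mat_{2\times2}$ only when $g$ is diagonal or antidiagonal, and a diagonal $g=\mathrm{diag}(a,d)$ gives $h=\mathrm{diag}(1,a/d,d/a,1)$, so the rank conditions on the conjugated blocks (choosing $X_1,X_4$ of equal large rank but $X_2,X_3$ of different ranks, so a swap is impossible) force $a/d=1$, hence $h=I_4$; the antidiagonal case is excluded by a similar rank asymmetry. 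Once $h=I_4$ is forced, the argument closes exactly as in Theorem~\ref{thm:needlewild}: one is left with $(C_1,C_2,C_3,C_4)$ for $(A_1,B_1)$ simultaneously similar to that for $(A_2,B_2)$, the shift structure of the $C_i$ forces the conjugating matrix to be block-lower-triangular-Toeplitz, and the remaining block equations yield $A_1g_1=g_1A_2$, $B_1g_1=g_1B_2$ with $g_1$ invertible.

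\begin{proof}
By Lemma~\ref{lem:orientationindependence} it suffices to treat one orientation of the figure eight $B$. We restrict attention to dimension vectors $(d_1,2)$ and use the identification of Lemma~\ref{lem:8matrixproblem}: a representation corresponds to a $4$-tuple $(M_1,M_2,M_3,M_4)$ of $d_1\times d_1$ matrices, indexed by $\{(1,1),(1,2),(2,1),(2,2)\}$, with $\GL(d_1)$ acting by simultaneous conjugation and $g\in\GL(2)$ acting through $h=g\ot(g^{-1})^T\in\GL(4)$.

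Fix $n$. Given a pair $(A,B)$ of $n\times n$ matrices, set $d_1=10n$ and associate the $4$-tuple $(Y_1(A,B),\dots,Y_4(A,B))=(X_1\op C_1,\,X_2\op C_2,\,X_3\op C_3(A,B),\,X_4\op C_4(B))$, where the $X_i$ are fixed $6n\times6n$ matrices chosen so that $\rank(X_1)=\rank(X_4)$ is large, $\rank(X_2),\rank(X_3)$ are distinct and differ from $\rank(X_1)$ and from each other by more than $3n$, and any linear combination of two or more distinct $X_i$ has rank exceeding $\max_k\rank(X_k)$; the $C_i$ are $4n\times 4n$ blocks of the same shift-with-entries form used in the proof of Theorem~\ref{thm:needlewild}, carrying $A$ in $C_3$ and $B$ in $C_4$ and zero in $C_1,C_2$, so that $\rank(C_i)\le 3n$ for all $i$.

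Suppose the tuples built from $(A_1,B_1)$ and $(A_2,B_2)$ are equivalent under the relation of Lemma~\ref{lem:8matrixproblem}. Then there is $g\in\GL(2)$, with $h=g\ot(g^{-1})^T=\{h_{jk}\}$, and $p\in\GL(d_1)$ with $p\big(\textstyle\sum_j h_{jk}Y_j(A_1,B_1)\big)p^{-1}=Y_k(A_2,B_2)$ for each $k$. Comparing ranks and using $\rank(C_i)\le 3n$ together with the rank hypotheses on the $X_i$, each $\sum_j h_{jk}Y_j$ has rank equal to $\rank(Y_k(A_2,B_2))$, which forces at most one $h_{jk}$ to be nonzero for each $k$; since $h$ is invertible, $h$ is monomial. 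The only monomial elements in the image of $g\mapsto g\ot(g^{-1})^T$ are obtained from diagonal or antidiagonal $g$; an antidiagonal $g$ would swap the index $(1,1)$ with $(2,2)$ and $(1,2)$ with $(2,1)$, which is incompatible with $\rank(X_2)\ne\rank(X_3)$, while a diagonal $g=\mathrm{diag}(a,d)$ gives $h=\mathrm{diag}(1,a/d,d/a,1)$, and matching $\rank(X_2)$ forces $a/d=1$. Hence $h=I_4$.

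Therefore $(Y_1(A_1,B_1),\dots,Y_4(A_1,B_1))$ is simultaneously similar to $(Y_1(A_2,B_2),\dots,Y_4(A_2,B_2))$, and restricting to the $C$-blocks, $(C_1,C_2,C_3(A_1,B_1),C_4(B_1))$ is simultaneously similar to $(C_1,C_2,C_3(A_2,B_2),C_4(B_2))$. As in the proof of Theorem~\ref{thm:needlewild}, commuting with the shift blocks forces the conjugating matrix to be block lower triangular and Toeplitz with invertible diagonal block $g_1$, and the equations for $C_3$ and $C_4$ then give $A_1g_1=g_1A_2$ and $B_1g_1=g_1B_2$. Conversely, any simultaneous similarity $(A_1,B_1)\sim(A_2,B_2)$ induces an equivalence of the associated representations. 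Since this embeds the wild problem of simultaneous similarity of two $n\times n$ matrices into the classification of representations of $B$ for every $n$, the figure eight is wild.
\end{proof}
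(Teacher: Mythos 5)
Your overall strategy is the same as the paper's (reduce via Lemma~\ref{lem:8matrixproblem} to a $4$-tuple matrix problem and re-run the needle argument), but there is a genuine gap at the scalar-pinning step. For diagonal $g=\mathrm{diag}(a,d)$ you correctly get $h=\mathrm{diag}(1,a/d,d/a,1)$, so the equivalence only tells you that $(a/d)Y_2$ and $(d/a)Y_3(A_1,B_1)$ are \emph{similar} to the corresponding target blocks. Rank is invariant under multiplication by a nonzero scalar, so ``matching $\rank(X_2)$ forces $a/d=1$'' is simply false, and since your hypotheses on the $X_i$ are purely rank-theoretic, nothing in your construction excludes $a/d\ne 1$; you would then only conclude that $(\mu^{-1}A_1,B_1)$ is simultaneously similar to $(A_2,B_2)$ for some unknown $\mu\in\C^\times$, which is not the reduction you claim. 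The repair is exactly the device the paper uses in Theorem~\ref{thm:needlewild}: the scalar is pinned by the \emph{spectrum}, not the rank ($g_{11}Y_1\sim Y_1$ forces $g_{11}=1$ because $Y_1$ has eigenvalue $1$ with the $C$-part nilpotent). So you must additionally specify, e.g., that each $X_i$ is a $0/1$ idempotent and each $C_i$ is nilpotent, and then argue via eigenvalues that $\lambda Y_i\sim Y_j$ forces $\lambda=1$ once the slots are matched.

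There are also specification problems that make the construction impossible as literally stated: inside $6n\times 6n$ blocks you cannot have $\rank(X_1)=\rank(X_4)$ together with $\rank(X_2),\rank(X_3)$ all pairwise more than $3n$ apart (three values pairwise $>3n$ apart do not fit in $[0,6n]$), and the condition ``any combination of two or more $X_i$ has rank exceeding $\max_k\rank(X_k)$'' is not the right threshold --- you need it to exceed $\max_k\rank(Y_k)\le\max_k\rank(X_k)+3n$ to get the contradiction, since the $C$-parts also contribute rank. Both are fixable by taking larger disjointly supported diagonal idempotents, but the stated numerics ($d_1=10n$) do not work. Likewise ``zero in $C_1,C_2$'' is ambiguous: if both are actually zero there is no shift block, and the final step (commutation forcing the block-lower-triangular Toeplitz form of the conjugating matrix) has nothing to bite on; you need $C_1$ to be the shift matrix from Theorem~\ref{thm:needlewild}. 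For comparison, the paper's proof avoids most of this by taking the $4$-tuples $(Y_1,Y_2(A,B),0,0)$: the two zero slots force the corresponding entries of $h$ to vanish by linear independence, after which the needle argument applies verbatim; your explicit observation that $g\ot(g^{-1})^T$ is monomial only for diagonal or antidiagonal $g$ is correct and a nice supplement, but with the zero padding it is not needed.
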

\begin{proof}
 Given two pairs of $n\times n$ matrices $(A_1,B_1)$ and $(A_2,B_2)$, we construct the four tuples $(Y_1,Y_2(A_1,B_1),0,0)$ and $(Y_1,Y_2(A_2,B_2),0,0)$, where $Y_1$ and $Y_2(A,B)$ are defined in exactly the same way as in Theorem \ref{thm:needlewild}. Following the same lines as in the proof of Theorem \ref{thm:needlewild}, we can conclude that $(Y_1,Y_2(A_1,B_1),0,0)$ and $(Y_1,Y_2(A_2,B_2),0,0)$ are related by the equivalence relation defined in Lemma \ref{lem:8matrixproblem} if and only if $(Y_1,Y_2(A_1,B_1))$ and $(Y_1,Y_2(A_2,B_2))$ are simultaneously similar. From here, we conclude in the same fashion as in Theorem \ref{thm:needlewild} that this happens if and only if $(A_1,B_1)$ and $(A_2,B_2)$ are simultaneously similar.
\end{proof}

\section{Tensor diagrams containing wild subdiagrams are wild}\label{sec:inclusion}
Our next goal is to show that if $T$ is a tensor diagram and $S$ a subdiagram, then under certain conditions $\mcR(S)$ is a equivalent to a full subcategory of $\mcR(T)$. For this, we first reduce the problem to the case where $S$ is an induced subdiagram of $T$.

\begin{defn}
 Given a tensor diagram $T=(V,E)$, a vertex $v\in V$, and a partitioning of the wires incident to $v$, $W_1\sqcup W_2=N(v)$, a \emph{splitting} of $T$ with respect to $(v,W_1,W_2)$ is the tensor diagram formed by
 \begin{enumerate}
  \item Replacing $v$ with two distinct vertices $v_1$ and $v_2$
  \item The wires $W_1$ become the wires incident to $v_1$ and likewise $W_2$ are the wires incident to $v_2$.
  \item A wire from $v_1$ to $v_2$ is added.
 \end{enumerate}
\end{defn}

The operation of splitting is the reverse operation of contraction along an edge. Given a tensor diagram $T=(V,E)$, a wire $e\in E$, and $c\in\N$, we denote by $\mcR(T)^{e=c}$ the subcategory of $\mcR(T)$ of all representations of $T$ where the dimension of the vector space associated to $e$ is $c$. This category is not abelian as it is not closed under direct sums. If $c=1$, however, it forms a closed compact monoidal category as $\C\ot \C\cong \C$. More generally, given a set $r=\{(e_i,c_i)|\; e_i\in E,\; c_i\in\N\}$, we can consider the restricted category $\mcR(T)^r$ given by restriction to representations where the vector space associated to $e_i$ has dimension $c_i$. Again, if all $c_i=1$ in $r$, $\mcR(T)^r$ forms a closed compact monoidal category.

\begin{lemma}\label{lem:splitting}
 Given a tensor diagram $T=(V,E)$, let $T'$ be the tensor diagram formed by a splitting of $T$ with respect to $(v,W_1,W_2)$ and $e$ be the added wire between $v_1$ and $v_2$. Then $\mcR(T')^{e=1}$ is equivalent to a full subcategory of $\mcR(T)$.
\end{lemma}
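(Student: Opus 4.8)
The plan is to construct an explicit functor $F \colon \mcR(T')^{e=1} \to \mcR(T)$ and show it is fully faithful with essential image a full subcategory, so that $\mcR(T')^{e=1}$ is equivalent to that image. The idea is that setting the new wire $e$ to dimension $1$ means the vector space $R(e) \cong \C$ carries no information, and the two matrices $R(v_1)$ and $R(v_2)$ (living in $\Hom(\bigotimes_{W_1^{\mathrm{in}}} \cdots, \C \ot \bigotimes_{W_1^{\mathrm{out}}} \cdots)$ and $\Hom(\C \ot \bigotimes_{W_2^{\mathrm{in}}} \cdots, \bigotimes_{W_2^{\mathrm{out}}} \cdots)$, up to where $e$ is oriented) can be composed/contracted along $e$ to produce a single matrix in $\Hom(\bigotimes_{i \in \rhu{N}(v)} R(i), \bigotimes_{j \in \lhu{N}(v)} R(j))$, which is exactly the data of $R(v)$ for a representation of $T$. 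Concretely, $F$ acts as the identity on all wires of $T$ (which are the wires of $T'$ other than $e$) and on all vertices other than $v_1, v_2$, and on $v$ it returns the contraction of $R(v_1)$ and $R(v_2)$ along the $1$-dimensional wire $e$. One should note that since $\dim R(e) = 1$, this contraction is (after fixing the canonical basis of $\C$) just an honest composition of linear maps, possibly precomposed with the appropriate symmetry/braiding isomorphisms to line up the tensor factors in the order dictated by $\rhu{N}(v)$ and $\lhu{N}(v)$ in $T$.

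The key steps, in order: (1) define $F$ on objects (representations) as above, checking that the contracted matrix lands in the correct $\Hom$-space — this is the same bookkeeping with tensor factors and braidings already used in Lemma \ref{lem:orientationindependence}; (2) define $F$ on morphisms by sending the collection $\{\varphi_f\}_{f \in E \setminus \{e\}}$ to itself (the component $\varphi_e \colon \C \to \C$ is forced to be a scalar, and by rescaling we may assume it is the identity without changing the isomorphism class, or better: observe that the commuting squares at $v_1$ and $v_2$ in $T'$ compose to give the commuting square at $v$ in $T$, so $\{\varphi_f\}$ really is a morphism in $\mcR(T)$); (3) check functoriality (immediate, since $F$ is essentially the identity away from the split vertex); (4) show $F$ is faithful — a morphism killed by $F$ has all components $\varphi_f = 0$ for $f \neq e$, hence is zero; (5) show $F$ is full — given a morphism $\{\psi_f\} \colon F(R_1) \to F(R_2)$ in $\mcR(T)$, define $\varphi_e = \id_\C$ and $\varphi_f = \psi_f$ otherwise, and verify the two squares at $v_1$ and $v_2$ commute. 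This last verification is the one genuine subtlety: the square at $v$ commuting does not formally decompose as two smaller commuting squares unless one uses that $\varphi_e$ is invertible (it is, being $\id_\C$) to "split" the identity $R_2(v) \circ \varphi_{\rhu N(v)} = \varphi_{\lhu N(v)} \circ R_1(v)$ through the contraction. Finally, (6) identify the essential image: it is the full subcategory of $\mcR(T)$ on those representations whose matrix at $v$ factors (through a $1$-dimensional space) as a composite along a partition $W_1 \sqcup W_2$ of $N(v)$ — equivalently, $F$ is essentially surjective onto that full subcategory, and fully faithful, hence an equivalence onto it.

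The main obstacle I expect is step (5), fullness — specifically making precise that a morphism of the contracted representations uniquely lifts to a morphism of the split representations. The cleanest way is to set $\varphi_e = \id_\C$ up front; then the square at $v_1$ in $T'$ reads $R_2(v_1) \circ \varphi_{\rhu N(v_1)} = (\id_\C \ot \varphi_{\lhu N(v_1) \setminus e}) \circ R_1(v_1)$ (with $e \in \lhu N(v_1)$, say), and similarly at $v_2$; one must check these two equalities follow from the single equality at $v$ in $T$ together with the fact that $R_i(v) = R_i(v_2) \circ R_i(v_1)$ (suitably braided). Since $R(e) = \C$ with $\varphi_e = \id$, the two factored squares are not automatically implied by the composite square in general — but they are here because the maps $R_i(v_1)$ and $R_i(v_2)$ are exactly the chosen factorization data carried by the object of $\mcR(T')^{e=1}$, not something we get to choose; the morphism condition at $v$ then pushes through each factor separately. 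I would spell this out with the braiding isomorphisms named explicitly, reusing the notation $\tilde\psi, \hat\psi$ style from Lemma \ref{lem:orientationindependence}, and remark that the whole argument is natural in $R_1, R_2$, giving the claimed equivalence of categories.
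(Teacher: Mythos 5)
Your object-level construction is the same as the paper's (the paper defines $R_\varphi(v):=R(v_1)\ot R(v_2)$, i.e.\ the contraction of the two tensors along the one-dimensional wire $e$), but the morphism-level part of your plan --- proving $F$ fully faithful --- contains genuine errors. Faithfulness (your step 4) fails because $F$ forgets $\varphi_e$, and $\varphi_e$ is not always determined by the other components: take $T$ to be a single vertex $v$ with one incoming dangling wire $a$ and one outgoing dangling wire $b$, split it into $v_1$ (carrying $a$ and $e$) and $v_2$ (carrying $e$ and $b$), and take $R(a)=R(b)=R(e)=\C$ with $R(v_1)=R(v_2)=0$; then $\varphi_a=\varphi_b=0$, $\varphi_e=1$ is a nonzero endomorphism in $\mcR(T')^{e=1}$ sent by $F$ to zero. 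Fullness (your step 5) fails as well, and not merely because of the normalization $\varphi_e=\id_\C$: with the same $T,T'$ and all wires one-dimensional, take $R_1(v_1)=0$, $R_1(v_2)=1$ and $R_2(v_1)=1$, $R_2(v_2)=0$. Both contract to the zero map at $v$, so $\psi_a=\psi_b=1$ is a morphism $F(R_1)\to F(R_2)$, but any lift would have to satisfy $\varphi_e\cdot R_1(v_1)=R_2(v_1)\cdot\psi_a$, i.e.\ $0=1$, at $v_1$. So your key claim that the commuting square at $v$ ``pushes through each factor separately'' is false: the factorization data $R_i(v_1),R_i(v_2)$ is extra structure that arbitrary morphisms of the contracted representations need not respect.

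This is also where your route diverges from the paper's. The paper never attempts a full-faithfulness argument for $F$ on hom-spaces; its proof works at the level of isomorphism classes only. The failure of injectivity on objects is exactly the rescaling $\alpha R(v_1)\ot\alpha^{-1}R(v_2)=R(v_1)\ot R(v_2)$, and the paper observes that two preimages related in this way are isomorphic in $\mcR(T')^{e=1}$ via the change of basis $\alpha$ on the wire $e$, so that $\varphi$ is injective on isomorphism classes; fullness is then asserted without the lifting argument you propose. Your proposal omits this rescaling observation entirely (it would indeed be unnecessary if full faithfulness held, but it does not), so as written it does not establish the lemma: to repair it you would either have to argue at the level of isomorphism classes as the paper does, or restrict attention to representations for which the constraints at $v_1$ and $v_2$ actually determine $\varphi_e$.
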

\begin{proof}
  Given a representation $R(T')\in \mcR(T')^{e=1}$, we define a map $\phi:R(T')\mapsto R_\varphi(T)\in \mcR(T)$ as follows. Note that $T'=(V',E')$ where $V'=(V\setminus \{v\})\cup\{v_1,v_2\}$ and $E'=E\cup\{e\}$. For $u\in V'$, $u\ne v_1,v_2$ we define $R_\varphi(u):=R(u)$, and for every $w\in E'$, $w\ne e$, $R_\varphi(e):=R(w)$. Then we define $R_\varphi(v):=R(v_1)\ot R(v_2)$. 
  
  We note that the non-injectivity of the map $\varphi$ comes from the fact that $\alpha R(v_1)\ot \alpha^{-1} R(v_2)=R(v_1)\ot R(v_2)$. Let $R_1(T')$ and $R_2(T')$ be two representations in $\mcR(T')^{e=1}$. This implies that for all $u\in V'$, $u\ne v_1,v_2$, $R_1(u)=R_2(u)$. Furthermore, there exists a non-zero $\alpha\in\C$ such that $R_1(v_1)=\alpha R_2(v_1)$ and $R_1(v_2)=\alpha^{-1} R_2(v_2)$. But this implies that $R_1(T')\cong R_2(T')$ by applying the change of basis $\alpha$ on the wire $e$. Therefore, the map $\varphi$ is injective on isomorphism classes of representations in $\mcR(T')^{e=1}$, showing that this category is equivalent to a subcategory of $\mcR(T)$. The fullness of the subcategory is obvious.
\end{proof}

We note that Lemma \ref{lem:splitting} can be applied inductively to show that $\mcR(T')^r$ is equivalent to a full subcategory of $\mcR(T)$, where $T'$ is formed from $T$ by a sequence of splittings and $r=\{(e_i,1)\}$, where $e_i$ is the edge added by the $i^{th}$ splitting.

Let $T=(V,E)$ be a tensor diagram and $S=(U,F)$ be a subdiagram. We define the following sequence of splittings of $T$:
\begin{enumerate}
 \item For every $v\in U$, partition the wires incident to $v$ into $W_1\sqcup W_2$, where $W_1$ is the set of the wires whose other endpoint lies in $U$. Split $T$ with respect to $(v, W_1,W_2)$. After all such splittings we get a new diagram $T_1$. We now define $U_1$ to be all the vertices $v_1$ formed at each splitting of $v\in U$. Note that $S$ is a subdiagram of the subdiagram induced by $U_1$ in $T_1$.
 \item For every $v_1\in U_1$, we partition the wires incident to $v_1$ into $W_1\sqcup W_2$ where $W_1$ consists of the wire from $v_1\to v_2$ formed by the splitting in the previous step, as well as all wires in $F$. Then we split $T_1$ with respect to $(v_1,W_1,W_2)$. We get two new vertices $v_{11}$ and $v_{12}$. We let $U_S$ be the set of all vertices of type $v_{11}$. After all such splittings, we get a new diagram $T_S$. 
\end{enumerate}

We call the above construction \emph{isolating $S$ in $T$}. We note that the subdiagram of $T_S$ induced by $U_S$, $\tilde{S}$, is isomorphic to $S$. Let $I_S$ be the wires added by the splittings transforming $T$ to $T_S$. Let $r_S=\{(e, 1)|\; e\in I_S\}$, then by Lemma \ref{lem:splitting}, $\mcR(T_S)^{r_S}$ is equivalent to a full subcategory of $\mcR(T)$.

We now want to show that $\mcR(\tilde{S})$ is equivalent to a full subcategory of $\mcR(T_S)^{r_S}$. Since $\tilde{S}$ is isomorphic to $S$, this will imply that $\mcR(S)$ is equivalent to a full subcategory of $\mcR(T)$. Since $\tilde{S}$ is an induced subdiagram of $T_S$, it suffices to show that if $T$ is a tensor diagram and $S$ an induced subdiagram then $\mcR(S)$ is equivalent to a full subcategory of $\mcR(T)$. To show this, we will first need to consider abelian flows on tensor diagrams.  We first recall basic definitions.

\begin{defn}
 Given a tensor diagram $T=(V,E)$ and a subset $U\subseteq V$, let $T[U]=(U,F)$ be the induced subdiagram. Let $\overline{U}:=V\setminus U$ and $\overline{F}:=E\setminus F$. A \emph{partial $\C^\times$-flow}  on $T$ with respect to $U\subseteq V$ is a map $f:\overline{F}\to\C^\times$ such that for every $v\in \overline{U}$, $$\prod_{e\in\rhu{N}(v)\cap \overline{F}}{f(e)^{-1}}\times\prod_{e\in\lhu{N}(v)\cap \overline{F}}{f(e)}=1$$ for some $U\subseteq V$. If $U=V$, then we simply call $f$ a $\C^\times$-flow on $T$. 
\end{defn}

\begin{defn}
 Let $T=(V,E)$ be a tensor diagram. Suppose $T$ is a directed graph and let $\tilde{T}=(\tilde{V},\tilde{E})$ be the graph formed from $T$ by ignoring the orientation. Then we form a new graph $\hat{T}$ as follows.
 \begin{enumerate}[(a)]
  \item For every pair of adjacent vertices $u,v\in\tilde{V}$, let us merge all wires between them into a single wire.
  \item Remove all loops from $\tilde{T}$.
 \end{enumerate}
The resulting graph $\hat{T}$ is simple. If it is a tree, we call $T$ a \emph{multi-tree}.
\end{defn}

We first consider the case when $T$ is closed. Suppose we have a closed tensor diagram $T=(V,E)$ and a partial $\C^\times$-flow $f$ defined with respect to $U\subseteq V$. Then $T$ minus the induced subgraph $T[U]$ is a tensor diagram whose dangling wires are precisely the wires that represent a cut set between $U$ and $\overline{U}$. We denote this set of wires by $D$. We may assume without loss of generality that every wire in $D$ is oriented towards $U$ by Lemma \ref{lem:orientationindependence}. The partial flow condition then implies that $\prod_{e\in D}{f(e)}=1$.

\begin{lemma}\label{lem:flowextension}
 Let $T=(E,V)$ be  a closed tensor diagram and $U\subseteq V$. Let $f$ be a partial $\C^\times$-flow defined with respect to $U\subseteq V$, then $f$ can be extended to a $\C^\times$-flow on $T$.
\end{lemma}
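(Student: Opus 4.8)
The plan is to keep the given values of $f$ on $\overline{F}$ untouched and to choose the remaining values, i.e. the values on the wires $F$ of the induced subdiagram $T[U]$, so that the flow equation holds at every vertex of $U$. Nothing has to be checked at a vertex $v\in\overline{U}$: every wire incident to such a $v$ lies in $\overline{F}$, so the flow equation at $v$ is exactly the partial-flow equation we are already given. As in the discussion preceding the lemma, by Lemma~\ref{lem:orientationindependence} we may assume every wire of the cut set $D$ is oriented towards $U$, so that $\prod_{e\in D}f(e)=1$; reversing a wire only replaces $f(e)$ by $f(e)^{-1}$ throughout the flow equations, so this reorientation costs nothing.

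Next I would rewrite the flow equation at a vertex $v\in U$, separating the incident wires of $F$ from the incident wires of $D$ (all of the latter being incoming, by our orientation choice). It becomes
$$\prod_{e\in\rhu{N}(v)\cap F}f(e)^{-1}\ \prod_{e\in\lhu{N}(v)\cap F}f(e)\ =\ \prod_{e\in\rhu{N}(v)\cap D}f(e)\ =:\ c_v,$$
so the task is to produce a map $f\colon F\to\C^\times$ whose multiplicative divergence at each $v\in U$ equals the prescribed value $c_v$. Since each wire of $D$ is incoming at exactly one vertex of $U$, we get $\prod_{v\in U}c_v=\prod_{e\in D}f(e)=1$; more importantly, the same cut-set bookkeeping carried out inside a single connected component $K$ of $T[U]$ yields $\prod_{v\in K}c_v=1$.

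Granting this balance condition, the extension is built one component of $T[U]$ at a time by the standard argument for prescribing divergences of a flow valued in an abelian group: on every loop and on every wire outside a fixed spanning forest, set $f\equiv 1$ (these contribute trivially to every divergence), and on the spanning forest solve the divergence equations by leaf-stripping — a leaf forces $f$ on its unique forest wire, delete it and repeat, the equation at the last surviving vertex of each tree holding automatically because the $c_v$ over that component multiply to $1$. Equivalently, the prescribed divergences form a $0$-chain in the image of the boundary map $C_1(T[U];\C^\times)\to C_0(T[U];\C^\times)$, whose image is exactly the $0$-chains that are trivial on each connected component. Together with the untouched values on $\overline{F}$ this produces $\widetilde f\colon E\to\C^\times$ extending $f$ and satisfying every flow equation, i.e. a $\C^\times$-flow on $T$.

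The only step with genuine content — everything else being the bookkeeping above together with the classical graph fact just quoted — is verifying that the prescribed divergences $c_v$ multiply to $1$ on \emph{each} connected piece of $T[U]$, not merely over all of $U$. This is exactly where closedness of $T$ and the cut-set identity $\prod_{e\in D}f(e)=1$ (applied to the wires meeting a single component of $T[U]$) are used, and it is the point I would write out carefully; the rest of the proof is routine.
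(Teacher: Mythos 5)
Your construction is, at bottom, the same as the paper's: orient the cut wires into $U$, reduce to a spanning tree of $T[U]$ (the paper sets the extension equal to $1$ on wires outside a spanning tree and spreads $k$-th roots over parallel wires of the multi-tree; you set $f\equiv 1$ on loops and non-forest wires), solve the remaining divergence equations by peeling leaves, and close the equation at the final vertex using the identity $\prod_{e\in D}f(e)=1$ obtained by multiplying the partial-flow conditions over the vertices of $\overline{U}$. Up to packaging (your chain-complex formulation versus the paper's star/tree peeling), the two arguments coincide.

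The gap is exactly at the step you single out as the one with content. The claim that the cut-set bookkeeping ``carried out inside a single connected component $K$ of $T[U]$ yields $\prod_{v\in K}c_v=1$'' does not follow from the hypotheses: the partial-flow relations are indexed by the vertices of $\overline{U}$, so multiplying them gives one identity per connected component of $T[\overline{U}]$ (and hence the global identity $\prod_{e\in D}f(e)=1$), but nothing per component of $T[U]$. Concretely, take $V=\{u_1,u_2,w\}$, $U=\{u_1,u_2\}$, and two wires $e_1\colon w\to u_1$, $e_2\colon w\to u_2$; this is a closed diagram with $F=\emptyset$, and $f(e_1)=2$, $f(e_2)=1/2$ is a partial $\C^\times$-flow with respect to $U$ (the only condition is at $w$), yet $c_{u_1}=2\neq 1$ and no extension exists, since the flow condition at $u_1$ reads $f(e_1)^{-1}=1$. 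So the per-component balance --- and with it the lemma as literally stated --- fails when $T[U]$ is disconnected, and no bookkeeping can rescue it there because the obstruction is genuine. When $T[U]$ is connected, your per-component identity is the global one and your argument is complete; in that case it is essentially identical to the paper's proof, which implicitly treats a connected $T[U]$ (a single multi-tree peeled down to a star) and whose applications only ever isolate connected subdiagrams (the open claw and the needle).
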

\begin{proof}
 As before, let $D$ be the cut set separating $T[U]=(U,F)$ from the complementary subgraph in $T$. We may assume that $T[U]$ has no loops as they contribute trivially to flows. We assume without loss of generality that every $e\in D$ is oriented towards $U$ using Lemma \ref{lem:orientationindependence}. Let us first assume that $\hat{T}[U]$  is a star. We may assume that all wires in $T[U]$ are oriented towards the central vertex $c$ by Lemma \ref{lem:orientationindependence}. Let $v\in U\setminus\{c\}$ be incident to the wires $e_{v}^1,\dots,e_{v}^k\in F$. Let $W$ be the wires incident to $v$ in $D$. Then to we define the extension of $f$, $\hat{f}$, to send every $e_{v}^i$ to one of the $k^{th}$ roots of $\prod_{w\in W}{f(w)}$. We need to show this defines $\C^\times$-flow on $T$. First of all, it is clear that the flow condition is satisfied for all of the vertices of $T[U]$ except for $c$. However, the flow condition is satisfied for $c$ by the fact that $\prod_{e\in D}{f(e)}=1$. So $\hat{f}$ is a $\C^\times$-flow on $T$.
 
 Now let $\hat{T}[U]$ be a tree. Let $L$ be the leaves of $\hat{T}[U]$. Let $e^{1}_v,\dots,e^k_v$ be a wires incident to $v\in L$, which we may assume are directed away from the leaf by Lemma \ref{lem:orientationindependence}. Let $W$ be the wires incident to $v$ in $D$, which we may assume are oriented towards $v$, again by Lemma \ref{lem:orientationindependence}. Then for all $i\in[k]$, we define $\hat{f}(e^{i}_v)$ to be one of the $k^{th}$ roots of $\prod_{w\in W}{f(w)}$. We have now extended the partial $\C^\times$-flow and we must extend it again to a submulti-tree of $T[U]$. We repeat this procedure until $\hat{T}[U]$ is a star and then we are done.

 We now consider the case where $S=(U,F)$ is an arbitrary subdiagram of $T$. We first choose a spanning tree of $S$, $G=(U,F')$. We may extend $f$ to a function $\hat{f}$ onto the edges of $G$, viewed as a subdiagram of $T[U]$, using the argument above. Then for every $e\in F\setminus F'$, we define $\hat{f}(e)=1$. This clearly defines a $\C^\times$-flow on $T$.
\end{proof}

Let $(U,F)$ be the induced subgraph of $T_S=(V,E)$, which by an abuse of notation we call $S$. Let $r:=\{(e,1)|\; e\in E\setminus F\}$ and note that $r_S\subseteq r$. This implies that $\mcR(T_S)^r$ is a full subcategory of $\mcR(T_S)^{r_S}$. We now consider a map $\varphi:\mcR(S)\to \mcR(T_S)^r$. Given $R(S)\in \mcR(S)$, the map $\varphi$ takes $R(S)$ to a representation $R_\varphi(T_S)$ in $\mcR(T_S)^r$ as follows. For each $e\in F$, $R_{\varphi}(e)=R(e)$ and for each $v\in U$, $R_\varphi(v)=R(v)$ (possibly using an isomorphism $\ott{V_i}\cong\ott{V_i}\ot \C^{\ot n}\ot(\C^*)^{\ot m}$ sending $\ott{v_i}\mapsto \ott{v_i}\ot 1^{\ot n}\ot (1^*)^{\ot m}$).For each $e\in\overline{F}$, $R_\varphi(e)=\C$ and for each $v\in\overline{U}$, $$R_\varphi(v)=\bigg(\ott_{e\in\rhu{N}(v)}{1^*}\bigg)\ot\bigg(\ott_{e\in\lhu{N}(v)}{1}\bigg).$$

\begin{lemma}\label{lem:subdiagram}
 The map $\varphi:\mcR(S)\to \mcR(T_S)^r$ realizes the equivalence of $\mcR(S)$ to a full subcategory of $\mcR(T_S)^r$.
\end{lemma}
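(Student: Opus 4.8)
The plan is to promote $\varphi$ to a functor $\mcR(S)\to\mcR(T_S)^r$ and show that it is fully faithful; a fully faithful functor is exactly an equivalence onto the full subcategory spanned by its image, which is what the statement asserts. On morphisms, send $\Phi=(\varphi_e)_{e\in F}\colon R_1(S)\to R_2(S)$ to the family $\varphi(\Phi)$ whose component on each $e\in F$ is $\varphi_e$ and whose component on each $e\in\overline F$ is $\id_{\C}$. That $\varphi(\Phi)$ is a morphism in $\mcR(T_S)$ is a routine check of the commuting square at every vertex: at $v\in\overline U$ all incident wires lie in $\overline F$, so both vertical maps are identities and both horizontal maps are the canonical map assembled from $1\in\Hom(\C,\C)$, hence the square commutes; at $v\in U$, after discarding the tensor factors $\C$ and $\C^{*}$ that the wires of $\overline F$ contribute to $R_\varphi(v)$ (via the natural isomorphism used in defining $R_\varphi(v)$), the square becomes precisely the commuting square of $\Phi$ at $v$. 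Functoriality is immediate, and faithfulness is immediate too, since restricting $\varphi(\Phi)$ to the wires of $F$ returns $\Phi$.

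The substance is fullness, and this is where flows enter. Let $\Psi\colon\varphi R_1\to\varphi R_2$ be a morphism in $\mcR(T_S)^r$ with components $\psi_e$; for $e\in\overline F$ the associated space is $\C$, so $\psi_e=\lambda_e$ is a scalar. Writing the commuting square of $\Psi$ at a vertex $v\in\overline U$, where both horizontal maps are the ``all ones'' tensor, forces $\prod_{e\in\rhu{N}(v)}\lambda_e=\prod_{e\in\lhu{N}(v)}\lambda_e$, so $e\mapsto\lambda_e$ is a partial $\C^{\times}$-flow on $T_S$ with respect to $U$. By Lemma~\ref{lem:flowextension} it extends to a $\C^{\times}$-flow $\hat f$ on $T_S$; the diagonal family $e\mapsto\hat f(e)\cdot\id$ is then an automorphism of any object of $\mcR(T_S)^r$, because the flow condition at a vertex is exactly what makes that square commute. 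Taking the resulting automorphism $\theta$ of $\varphi R_2$, the composite $\theta^{-1}\circ\Psi$ has component $\id_{\C}$ on every wire of $\overline F$, so its restriction to $F$ satisfies the commuting squares of $\mcR(S)$ and defines a morphism $\Phi\colon R_1(S)\to R_2(S)$ with $\varphi(\Phi)=\theta^{-1}\circ\Psi$. Thus, modulo the flow automorphisms $\theta$, every morphism of the full subcategory on the image of $\varphi$ comes from $\mcR(S)$; one then observes that such $\theta$ are already realized by $\varphi$ (they are the images of the diagonal automorphisms attached to $\hat f$ restricted to the $F$-wires, once the extension is chosen compatibly on $F$), so $\Psi=\theta\circ\varphi(\Phi)$ lies in the image of $\varphi$. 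Applying the same reasoning to an isomorphism $\Psi$ shows in addition that $\varphi$ reflects isomorphisms, confirming that the image is genuinely the object class of a full subcategory equivalent to $\mcR(S)$; since $\tilde S\cong S$ and, by Lemma~\ref{lem:splitting} applied inductively, $\mcR(T_S)^{r_S}$ embeds as a full subcategory of $\mcR(T)$, the stated conclusion follows.

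I expect the fullness step to be the main obstacle. The genuinely delicate points are: (i) verifying that, after the tensor-factor identifications, the square of $\Psi$ at a vertex $v\in U$ turns into the $S$-square of the candidate morphism — this amounts to tracking the scalars the $\overline F$-wires contribute at $v$ and checking that the \emph{full} flow $\hat f$ (not merely the partial flow $\lambda$) absorbs exactly those scalars, which is why one must pass to the extension and choose it carefully on the $F$-wires; and (ii) invoking Lemma~\ref{lem:flowextension}, which is stated only for closed tensor diagrams, so one must first carry out the preliminary reduction of the induced-subdiagram statement to the closed case. Beyond these, everything is bookkeeping with tensor products and the empty-tensor/unit conventions.
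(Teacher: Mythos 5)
Your central mechanism coincides with the paper's: read off from the components on the wires of $\overline{F}$ a partial $\C^{\times}$-flow with respect to $U$, extend it by Lemma \ref{lem:flowextension}, and absorb the resulting scalars into a change of basis on $S$. The paper's proof does exactly this, but only for \emph{isomorphisms}: it shows that $R_\varphi(T_S)\cong R'_\varphi(T_S)$ forces $R(S)\cong R'(S)$, and treats fullness as not requiring comment. Your ``reflects isomorphisms'' step reproduces that argument correctly, since the components of an isomorphism are automatically invertible scalars on the $\overline{F}$-wires.

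The gap is in your fullness argument, which is the part you add beyond the paper. First, for a general morphism $\Psi$ the scalars $\lambda_e$ on the wires of $\overline{F}$ may be zero (the zero morphism already does this), so $e\mapsto\lambda_e$ is not a partial $\C^{\times}$-flow, Lemma \ref{lem:flowextension} does not apply, and $\theta$ cannot be constructed; you never treat this case. Second, and more seriously, the concluding claim that $\theta$, hence $\Psi=\theta\circ\varphi(\Phi)$, ``lies in the image of $\varphi$'' fails: every morphism in the image of $\varphi$ has identity components on all wires of $\overline{F}$, while $\theta$ has component $\hat f(e)$ there, and you cannot replace $\theta$ by $\varphi$ of something because $\hat f|_F$ is in general not a morphism of $\mcR(S)$ --- the flow condition at a vertex of $U$ involves its incident $\overline{F}$-wires, so the product of $\hat f$ over the $F$-wires alone need not equal $1$. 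In fact strict fullness is false in general: any $\C^{\times}$-flow on $T_S$ that is nontrivial on $\overline{F}$ (for instance along a path through $\overline{U}$ joining two vertices of $U$) gives an automorphism of $\varphi R$ with nonidentity components on $\overline{F}$, which is a morphism between image objects not of the form $\varphi(\Phi)$; one can also arrange genuinely larger endomorphism sets. What your factorization actually establishes (for invertible $\Psi$) is that $\varphi$ is injective on isomorphism classes, with every morphism between image objects differing from an image morphism by a flow automorphism --- which is precisely what the paper proves and what the wildness argument later uses, but it is weaker than the full faithfulness you assert.
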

\begin{proof}
 We simply need to show that if $R_\varphi(T_S)$ and $R'_\varphi(T_S)$ are isomorphic for $R(S),R'(S)\in \mcR(S)$, then $R(S)\cong R'(S)$. So suppose that $R_\varphi(T_S)$ and $R'_\varphi(T_S)$ are related by an element $G\in\GL(T_S)$. We may assume that the action of $G$ restricted to $S$ is the identity (again by abuse of notation, by $S$ we mean the induced subdiagram of $T_S$, $(U,F)$, isomorphic to $S$). So the action of $G$ on every $v\in \overline{U}$ sends the tensor $$\bigg(\ott_{e\in\rhu{N}(v)}{1^*}\bigg)\ot\bigg(\ott_{e\in\lhu{N}(v)}{1}\bigg)$$ to itself. We see that the action of $G$ describes a partial $\C^\times$-flow on $T$ with respect to $U$. Let $D$ be the set of wires from $T[U]$ to the complementary subdiagram as before. The action of $G$ on the wires in $D$ acts non-trivially on the tensors associated to some of the vertices in $U$. We need to show that there is a representation of $S$, $R''(S)\cong R(S)$ such that the induced action of $G$ on $R''_\varphi(T_S)$ restricted to the 
vertices $U$ acts trivially. By Lemma \ref{lem:flowextension}, we may extend the partial $\C^\times$-flow induced by $G$ to a $\C^\times$-flow on all of $T_S$. This defines an element of $H\in\GL(S)$. Let $R''(S):=H.R(S)$, which is by definition an isomorphic representation of $S$. Since $\GL(S)$ is a subgroup of $\GL(T_S)$ in a natural way, $H$ lifts to a an element $\tilde{H}\in \GL(T_S)$. Furthermore, it commutes with $G$. Then if we look at $\tilde{H}G. R_\varphi(T_S)$, we see that this gives the flow on $T_S$ guaranteed by Lemma \ref{lem:flowextension} and that this group element is in the isotropy group of $\GL(T_S)$ acting on $T_S$. So we see that if $R_\varphi(T_S)$ and $R'_\varphi(T_S)$ are isomorphic by an element $G\in \GL(T_S)$, then $R(S)$ and $R'(S)$ are isomorphic by an element $H\in GL(S)$. 
\end{proof}

With an eye towards Theorem \ref{thm:trivalentwild}, we are particularly interested in closed tensor diagrams with a vertex of degree $\ge 3$. If none of the incident edges of this vertex is a loop, then there exists a splitting such that the resulting graph has a claw as an induced subgraph and there are no dimension restrictions on the dimensions of the wires in the claw. This is obvious if the tensor diagram is simple; in the non-simple scenario, two other situations arise. Figure \ref{fig:splitting} shows both situations and the resulting splittings realizing the claw as an induced subgraph. Solid wires have no dimension restrictions while the dotted edges are the edges added by the splitting and thus can only have representations of $\C$. Lemma \ref{lem:subdiagram} implies that determining the indecomposable representations of a closed tensor diagram with such vertex of degree $\ge 3$, not counting loops, is as hard as determining the indecomposable representations of the claw diagram. The next lemma 
states that this is as hard as determining the indecomposable representations of trivalent tensors.

\begin{figure}
\centering
 \begin{tikzpicture}
  \draw[fill=black] (0,0) circle (.05);
  \draw (0,-.25) node{$v_1$};
  \draw[fill=black] (1,0) circle(.05);
  \draw (1,-.25) node{$v_2$};
  \draw[->] (.1,.1) -- (.5,.1);
  \draw (.5,.1) -- (.9,.1);
  \draw[->] (.1,0) -- (.5,0);
  \draw (.5,0) -- (.9,0);
  \draw[->] (.1,-.1) -- (.5,-.1);
  \draw (.5,-.1) -- (.9,-.1);
  \draw (1.5,0) node{$\mapsto$};
  \draw[fill=black] (2,0) circle(.05);
  \draw (2,-.25) node{$v_1$};
  \draw[->] (2,0) -- (2.25,.375);
  \draw (2.25,.375) -- (2.5,.75);
  \draw[->] (2,0) -- (2.25,0);
  \draw (2.25,0) -- (2.5,0);
  \draw[->] (2,0) -- (2.25,-.375);
  \draw (2.25,-.375) -- (2.5,-.75);
  \draw[dashed,->](2.5,.75) -- (3,.75);
  \draw[dashed] (3,.75) -- (3.5,.75);
  \draw[dashed,->](2.5,0) -- (3,0);
  \draw[dashed] (3,0) -- (3.5,0);
  \draw[dashed,->](2.5,-.75) -- (3,-.75);
  \draw[dashed] (3.5,-.75) -- (3.5,-.75);
  \draw[dashed,->] (3.5,0) -- (3.5,.25);
  \draw[dashed] (3.5,.25) -- (3.5,.5);
  \draw[dashed,->] (3.5,-.5) -- (3.5,-.25);
  \draw[dashed] (3.5,-.25) -- (3.5,0);
  \draw[fill=black] (3.5,.75) circle(.05);
  \draw[fill=black] (3.5,0) circle(.05);
  \draw[fill=black] (3.5,-.75) circle(.05);
  \draw[fill=black] (2.5,.75) circle(.05);
  \draw[fill=black] (2.5,0) circle(.05);
  \draw[fill=black] (2.5,-.75) circle(.05);
  
  \draw[fill=black] (-1,-2) circle (.05);
  \draw (-1,-2.25) node{$v_2$};
  \draw[fill=black] (0,-2) circle(.05);
  \draw (0,-2.25) node{$v_1$};
  \draw[->] (-.9,-1.9) -- (-.5,-1.9);
  \draw (-.5,-1.9) -- (-.1,-1.9);
  \draw[->] (-.9,-2) -- (-.5,-2);
  \draw (-.5,-2) -- (-.1,-2);
  \draw[->] (.1,-2) -- (.5,-2);
  \draw (.5,-2) -- (.9,-2);
  \draw[fill=black] (1,-2) circle (.05);
  \draw (1,-2.25) node{$v_3$};
  \draw (1.5,-2) node{$\mapsto$};
  \draw[fill=black] (2,-1.5) circle(.05);
  \draw[fill=black] (2,-2.5) circle(.05);
  \draw[dashed,->] (2,-2.5) -- (2,-2);
  \draw[dashed] (2,-2) -- (2,-1.5);
  \draw[dashed,->] (2,-1.5) -- (2.5,-1.5);
  \draw[dashed] (2.5,-1.5) -- (3,-1.5);
  \draw[dashed,->] (2,-2.5) -- (2.5,-2.5);
  \draw[dashed] (2.5,-2.5) -- (3,-2.5);
  \draw[fill=black] (3,-1.5) circle(.05);
  \draw[fill=black] (3,-2.5) circle(.05);
  \draw[->] (3,-1.5) -- (3.25,-1.75);
  \draw (3.25,-1.75) -- (3.5,-2);
  \draw[->] (3,-2.5) -- (3.25,-2.25);
  \draw (3.25,-2.25) -- (3.5,-2);
  \draw[fill=black] (3.5,-2) circle(.05);
  \draw (3.6,-2.25) node{$v_1$};
  \draw[->] (3.5,-2) -- (4,-2);
  \draw (4,-2) -- (4.5,-2);
  \draw[fill=black] (4.5,-2) circle(.05);
  \draw (4.5,-2.25) node{$v_3$};
 \end{tikzpicture}
 \caption{Two splittings of degree three vertices showing the claw as an induced subgraph.}\label{fig:splitting}
\end{figure}
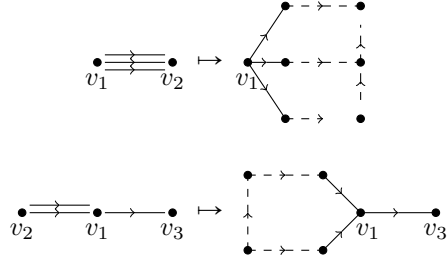

\begin{lemma}\label{lem:openclaw}
 Let $C$ be a claw diagram (with any orientation on the wires) and $C^0$ the open claw diagram. Then $\mcR(C^0)$ is a full subcategory of $\mcR(C)$. Similarly, let $\hat{N}$ be the needle diagram with a vertex added to its dangling wire and $N$ be the needle diagram. Then $\mcR(N)$ is a full subcategory of $\mcR(\hat{N})$.
\end{lemma}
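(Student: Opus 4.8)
The plan is to exhibit in each case an explicit fully faithful functor from the representation category of the smaller diagram onto a full subcategory of that of the larger diagram, obtained by ``padding'' a representation with the zero morphism at the newly added leaf vertices. Since by Lemma \ref{lem:orientationindependence} the category $\mcR(-)$ depends only on the underlying semi-graph, I may fix whichever orientation is convenient; in fact the construction below is insensitive to orientation, because the zero morphism lies in whatever $\Hom$-space is attached to a degree-one vertex regardless of how its wire is directed.

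For the claw, I would write $C=(V,E)$ with $V=\{c,w_1,w_2,w_3\}$ and $E=\{e_1,e_2,e_3\}$, where $e_i$ joins the central vertex $c$ to the degree-one leaf $w_i$; then $C^0$ is precisely the subdiagram $(\{c\},E)$, in which $e_1,e_2,e_3$ dangle at $c$, so that $C$ and $C^0$ carry literally the same wire set. Define $F\colon\mcR(C^0)\to\mcR(C)$ by sending a representation $R$ to the representation $\widetilde R$ that agrees with $R$ on each wire $e_i$ and on the vertex $c$, and assigns to each $w_i$ the zero morphism (an element of $R(e_i)^*$ or of $R(e_i)$ according to the direction of $e_i$); on a morphism $(\varphi_1,\varphi_2,\varphi_3)$ the functor acts as the identity. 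This $F$ is well defined into $\mcR(C)$ because the commuting square at $c$ is exactly the condition defining a morphism in $\mcR(C^0)$, while the commuting square at each $w_i$ holds trivially since $\widetilde R(w_i)=\widetilde{R'}(w_i)=0$.

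The next step is to verify that $F$ realizes $\mcR(C^0)$ as a full subcategory of $\mcR(C)$. Faithfulness (and injectivity on objects) is immediate, since $C$ and $C^0$ have the same wires and $\widetilde R$ plainly determines $R$. Fullness holds because a morphism $\widetilde R\to\widetilde{R'}$ in $\mcR(C)$ is a triple $(\varphi_1,\varphi_2,\varphi_3)$ whose only nonvacuous constraint is the square at $c$ — the squares at the $w_i$ being automatic — and that constraint is exactly the defining condition for a morphism $R\to R'$ in $\mcR(C^0)$. Finally, $F$ is an equivalence onto the full subcategory $\mcS\subseteq\mcR(C)$ of representations all of whose leaf tensors vanish: given such a representation, forgetting $w_1,w_2,w_3$ produces a representation of $C^0$ mapped back to it by $F$ on the nose, so $F$ is essentially surjective onto $\mcS$, and with full faithfulness this gives $\mcR(C^0)\simeq\mcS$.

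The needle case runs verbatim with a single new leaf in place of three: writing $\hat N=(\{v,w\},\{\ell,e\})$ with $\ell$ the loop at $v$ and $e$ the wire from $v$ to the degree-one vertex $w$, the diagram $N=(\{v\},\{\ell,e\})$ is $\hat N$ with $e$ made to dangle, and the functor $\mcR(N)\to\mcR(\hat N)$ keeps the spaces on $\ell$ and $e$ and the tensor at $v$ and places the zero morphism at $w$; the square at $v$ is unchanged and the square at $w$ is vacuous, so the same three checks apply. I do not expect any genuine obstacle here: the one point deserving care is the bookkeeping observation that a degree-one vertex carrying the zero tensor imposes no condition on morphisms and adds no new wire to the data of a morphism — which is exactly why padding by zeros suffices in these two special cases, in contrast to the flow-theoretic argument of Lemma \ref{lem:subdiagram} that is needed when the excised vertices have higher degree.
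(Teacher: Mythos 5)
Your proposal is correct and follows essentially the same route as the paper's proof: both pad a representation of $C^0$ (resp.\ $N$) by placing the zero tensor at the added leaf vertices, observe that the commuting squares at those leaves are vacuous, and conclude full faithfulness onto the subcategory of representations whose leaf tensors vanish. Your write-up merely spells out the functoriality, fullness, and image-identification steps that the paper declares ``clear.''
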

\begin{proof}
 We may assume without loss of generality that all wires are directed away from the leaves by Lemma \ref{lem:orientationindependence}. Now let us construct a map $\varphi:\mcR(C^0)\to \mcR(C)$ sending $R(C^0)\in\mcR(C^0)$ to $R_\varphi(C)\in\mcR(C)$. Let $c$ denote the only vertex in $C^0$. Then $R_\varphi(c):=R(c)$ and for all other vertices $v$ in $C$, define $R_\varphi(v)=0$. It is clear that $R_\varphi(C)\cong R'_\varphi(C)$ if and only if $R(C^0)\cong R'(C^0)$. The fullness of the subcategory is also clear.
 
 Similarly for $R(N)$, we define a map $\varphi:\mcR(N)\to \mcR(\hat{N})$. Given $R(N)\in \mcR(N)$, we define $R_\varphi(\hat{N})\in\mcR(\hat{N})$ as follows: $R_\varphi(e)=R(e)$ for all wires, $R_\varphi(c)=R(c)$ for the single vertex $c$ in $N$, and $R_\varphi(v)=0$, where $v$ is the vertex in $\hat{N}$ that is not in $N$. It is clear that that $R_\varphi(\hat{N})\cong R'_\varphi(\hat{N})$ if and only if $R(N)\cong R'(N)$. The fullness of the subcategory is also clear.
\end{proof}
\begin{corollary}\label{cor:closeddiagramswild}
 Every closed tensor diagram with a vertex of degree $\ge 3$ (not counting loops) is wild. Any closed tensor diagram with a vertex with a loop and a non-empty neighborhood is wild. Any tensor diagram which has a connected component consisting of a single vertex with at least two loops is wild.
\end{corollary}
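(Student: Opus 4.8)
The plan is to handle each of the three clauses by exhibiting, inside $\mcR(T)$ and as a full subcategory up to equivalence, one of the three diagrams already shown to be wild: the open claw $C^0$ (Theorem \ref{thm:trivalentwild}), the needle $N$ (Theorem \ref{thm:needlewild}), or the figure eight $B$ (Proposition \ref{prop:8wild}). The one general fact I would record first is that ``equivalent to a full subcategory of'' composes: if $A$-Mod is equivalent to a full subcategory of $\mcR(D)$ for every finite dimensional algebra $A$, and $\mcR(D)$ is equivalent to a full subcategory of $\mcR(T)$, then the same holds for $\mcR(T)$, since a full subcategory of a full subcategory is full and equivalences preserve fullness. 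So in each case it suffices to produce a chain $\mcR(D)\hookrightarrow\cdots\hookrightarrow\mcR(T)$ of equivalences onto full subcategories.

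For the first clause, let $v$ be the vertex of $T$ with at least three incident non-loop wires. As in the discussion preceding Lemma \ref{lem:openclaw}, a suitable sequence of splittings --- the two non-simple local configurations at $v$ being exactly those in Figure \ref{fig:splitting}, with any loops at $v$ carried off onto an auxiliary vertex --- produces a closed diagram $T'$ containing the closed claw $C$ (the trivalent vertex with its three leaves) as an \emph{induced} subdiagram, in such a way that the three wires of $C$ carry no dimension restriction while the wires created by the splittings are restricted to dimension one. By Lemma \ref{lem:subdiagram}, whose embedding assigns $\C$ to precisely the wires outside $C$, together with the inductive form of Lemma \ref{lem:splitting}, $\mcR(C)$ is equivalent to a full subcategory of $\mcR(T)$; then $\mcR(C^0)\hookrightarrow\mcR(C)$ as a full subcategory by Lemma \ref{lem:openclaw}, and $C^0$ is wild by Theorem \ref{thm:trivalentwild}. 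Hence $T$ is wild.

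The second and third clauses are the same pattern with the needle and the figure eight. For the second, let $v$ carry a loop and be joined to some $w\neq v$; the subdiagram $S$ on $\{v,w\}$ consisting of one loop at $v$ and one $v$--$w$ wire is isomorphic to $\hat N$, and since $T$ is closed so is the diagram $T_S$ obtained by isolating $S$, so Lemmas \ref{lem:splitting} and \ref{lem:subdiagram} give $\mcR(\hat N)=\mcR(\tilde S)$ equivalent to a full subcategory of $\mcR(T_S)^{r_S}$, hence of $\mcR(T)$; then chain with $\mcR(N)\hookrightarrow\mcR(\hat N)$ (Lemma \ref{lem:openclaw}) and Theorem \ref{thm:needlewild}. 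For the third, let the connected component $\Gamma$ be a single vertex carrying $\ell\ge 2$ loops and no other wires. Since $\Gamma$ is a whole component, assigning the zero representation to every wire and vertex outside $\Gamma$ embeds $\mcR(\Gamma)$ as a full subcategory of $\mcR(T)$ (the commuting squares at vertices outside $\Gamma$ are trivial). Moreover $\Gamma$ is closed and contains the figure eight $B$ --- the vertex with two of its loops --- as an induced subdiagram, so isolating $B$ in $\Gamma$ and applying Lemmas \ref{lem:splitting} and \ref{lem:subdiagram} puts $\mcR(B)$ inside $\mcR(\Gamma)$, hence inside $\mcR(T)$, as a full subcategory; and $B$ is wild by Proposition \ref{prop:8wild}. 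So $T$ is wild. (This is exactly the point of asking for $\Gamma$ to be a \emph{component}: it makes $\Gamma$ closed, so the isolating machinery applies even though $T$ itself need not be closed here.)

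The step that carries genuine content, and where I expect to spend the effort, is the first clause: checking that across the simple local picture at a degree-$\ge 3$ vertex and the two non-simple pictures of Figure \ref{fig:splitting} one can always extract the closed claw as an \emph{induced} subdiagram with all three of its wires unconstrained, and that peeling off any loops at $v$ first does not interfere. Everything else is the essentially formal bookkeeping of composing full-subcategory equivalences and invoking Theorems \ref{thm:trivalentwild} and \ref{thm:needlewild} and Proposition \ref{prop:8wild}.
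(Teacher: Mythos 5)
Your proposal is correct and follows essentially the same route as the paper: the paper's proof is a one-line citation of Lemmas \ref{lem:splitting}, \ref{lem:subdiagram}, \ref{lem:openclaw}, Theorems \ref{thm:trivalentwild} and \ref{thm:needlewild}, Proposition \ref{prop:8wild}, and the disconnected-component observation, with the first clause resting on exactly the splitting-to-an-induced-claw argument of Figure \ref{fig:splitting} that you spell out. Your write-up merely makes explicit the bookkeeping (composition of full-subcategory equivalences, isolating $\hat N$ and the figure eight, closedness of the single component) that the paper leaves implicit.
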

\begin{proof}
 This follows directly from Lemmas \ref{lem:splitting}, \ref{lem:subdiagram} and \ref{lem:openclaw}, Theorem \ref{thm:trivalentwild}, Theorem \ref{thm:needlewild}, and Proposition \ref{prop:8wild} and the observation that given a disconnected tensor diagram $T=T_1\sqcup T_2$, the categories $\mcR(T_1)$ and $\mcR(T_2)$ are clearly full subcategories of $\mcR(T)$.
\end{proof}

We now turn out attention to tensor diagrams with dangling wires. We first observe that we may assume that a tensor diagram has no wire with no endpoints. The reason is it shares the same vertices as the tensor diagram with this wire removed. As such, these two diagrams have precisely the same category of representations. So we only need look at tensor diagrams where there are wires with one vertex.

Let $T=(V,E)$ be a tensor diagram and let $H\subseteq E$ be the set of dangling wires, i.e. wires with one endpoint. Now let us consider any representation $R(T)$. Let $v\in V$ be incident to some $e\in H$. Then applying either $\alpha I$ or $\alpha^{-1}I$ to the wire $e$ (depending on the orientation of $e$), we get that the representation $R'(T)$ given by $R'(w)=R(w)$ for $w\ne v$ and $R'(v)=\alpha R(v)$ is an isomorphic representation to $R(T)$.

Given $R'(T)$, we may now apply either $\alpha I$ or $\alpha^{-1} I$ to another wire incident to $v$, say $f$, depending on the orientation of $f$, such that $R'(v)$ gets replaced with $\alpha^{-1}R'(v)=R(v)$. If $f\in H$, then we see that multiplying by a scalar on one of the dangling wires incident to $v$ is isomorphic by multiplying by a (potentially different) scalar on a different dangling wire incident to $v$. If $f\notin H$, then let $u$ be its other endpoint, which multiplies $R(y)$ by $\alpha$. Then we get a new representation $R''(T)$, where $R''(u)=\alpha R(u)$ and for $w\ne u$, $R''(w)=R(w)$. We see that $R''(T)$ is isomorphic to $R(T)$. In this way we get the following fact:

\begin{fact}\label{fact:isom}
 Let $T=(V,E)$ be a connected tensor diagram with dangling wires $H$. Given a representation $R(T)$, any representation formed from $R(T)$ by multiplying $R(v)$, $v\in V$, by a non-zero scalar $\alpha$ is an isomorphism of representations. Furthermore, the representation induced by multiplying every $w\in H$ by different scalars is isomorphic to the representation induced by multiplying a single wire in $H$ by a certain scalar.
\end{fact}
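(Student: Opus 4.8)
The plan is to exhibit the asserted rescaling as the action of a single element of $\GLd(T)$ on $\mcR_{\bfd}(T)$; being a heterogeneous change of basis, it then automatically produces an isomorphic representation. Fix the vertex $v\in V$ and the scalar $\alpha\in\C^\times$, and look for $g=(g_e)_{e\in E}\in\GLd(T)$ with every $g_e=\beta_e\,I$ a scalar matrix. Under the action formula $g.M_u=\big(\ott_{j\in\lhu{N}(u)}{g_j}\big)M_u\big(\ott_{i\in\rhu{N}(u)}{g_i^{-1}}\big)$, a scalar $g_e$ contributes only the number $\beta_e^{\pm1}$, so $g.R(u)=\lambda_u R(u)$ with $\lambda_u:=\prod_{j\in\lhu{N}(u)}\beta_j\cdot\prod_{i\in\rhu{N}(u)}\beta_i^{-1}$ (a loop at $u$ contributes $\beta_e\beta_e^{-1}=1$ and so is irrelevant). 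Hence it is enough to choose the $\beta_e$ so that $\lambda_v=\alpha$ and $\lambda_u=1$ for every $u\neq v$.

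The main point will be producing such a choice, and here I use connectedness together with $H\neq\emptyset$. Pick a path $v=v_0,v_1,\dots,v_m$ in the underlying graph of $T$ ending at a vertex $v_m$ incident to a dangling wire $h\in H$ (with $m=0$ if $v$ already meets a dangling wire), and pick a wire $e_i$ between $v_{i-1}$ and $v_i$ for each $i$. Set $\beta_e=1$ for every wire outside $\{e_1,\dots,e_m,h\}$, then fix $\beta_{e_1},\dots,\beta_{e_m},\beta_h$ successively: choose $\beta_{e_1}$ (equal to $\alpha$ or $\alpha^{-1}$ depending on the orientation of $e_1$ at $v_0$) to get $\lambda_{v_0}=\alpha$; choose $\beta_{e_2}$ to cancel the contribution of $e_1$ at $v_1$ so that $\lambda_{v_1}=1$; continue down the path; finally choose $\beta_h$ to cancel the contribution of $e_m$ at $v_m$. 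Since $h$ is dangling it meets no vertex besides $v_m$, so this last adjustment creates no further constraint, and every vertex off the path is incident only to wires with $\beta_e=1$, hence has $\lambda_u=1$. This is a construction of $\C^\times$-flow type, in the same spirit as Lemma~\ref{lem:flowextension}; with it in hand, $g.R(T)$ is exactly $R(T)$ with $R(v)$ replaced by $\alpha R(v)$, proving the first claim.

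For the second claim, rescaling a dangling wire $w\in H$ by a scalar changes only the matrix at its unique endpoint, and again only by a scalar; so rescaling all of $H$ by scalars $(\gamma_w)_{w\in H}$ alters $R(T)$ only by multiplying the matrices at finitely many vertices by scalars. Applying the first claim once for each such vertex and using transitivity of $\cong$, the resulting representation is isomorphic to $R(T)$, hence in particular isomorphic to the representation obtained by rescaling any single fixed wire of $H$ by an appropriate scalar. I expect this part to be routine once the first is in place; the genuine work is the flow construction of the first paragraph above.
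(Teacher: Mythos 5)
Your argument is correct and is essentially the paper's: the paper establishes the Fact by the same mechanism of scalar basis changes propagated along a path from a dangling wire to the target vertex, with the dangling wire absorbing the residual scalar (it just performs the moves one wire at a time, whereas you package them into a single element of $\GLd(T)$ with scalar components supported on the path). The reduction of the second claim to the first via transferring scalars between dangling wires also matches the paper's reasoning.
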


\begin{theorem}\label{thm:wildcases}
 If a tensor diagram $T=(V,E)$ with dangling wires $H\subseteq E$ contains the open claw, needle, or figure eight as a subdiagram, then it is wild.
\end{theorem}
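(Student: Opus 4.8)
The plan is to reduce, case by case, to the assertion that $\mcR(S)$ embeds as a full subcategory of $\mcR(T)$ for the relevant forbidden subdiagram $S$, and then to quote Theorems \ref{thm:trivalentwild} and \ref{thm:needlewild} and Proposition \ref{prop:8wild}. First I would replace $T$ by the connected component containing the forbidden subdiagram; since $\mcR$ of a component is always a full subcategory of $\mcR(T)$, we may assume $T$ is connected. If that component happens to be closed, the hypothesis immediately yields one of the degree conditions of Corollary \ref{cor:closeddiagramswild}: an open claw gives a vertex of degree $\ge 3$ not counting loops; a needle gives a vertex carrying a loop with non-empty neighborhood (its non-loop wire cannot dangle in a closed diagram); a figure eight gives a vertex with two loops, which either has a neighbor or is its own component. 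So from now on we may assume $T$ is connected and has a dangling wire.

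Next I would dispose of the figure eight as a separate case: were the figure-eight vertex $v$ incident only to loops, it would be isolated, so its component — which contains a dangling wire — would be a lone vertex with no dangling wire, a contradiction; hence $v$ carries a non-loop incident wire $e$, and $v$ together with one of its loops and $e$ is a needle subdiagram of $T$. It therefore suffices to treat $S$ equal to the open claw or the needle, and the gain from this reduction is that each of these is connected and carries a dangling wire, so that Fact \ref{fact:isom} is available for it.

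Now I would isolate $S$ in $T$ as before, obtaining $T_S$ with an induced subdiagram $\tilde S \cong S$ together with, by the inductive form of Lemma \ref{lem:splitting}, an equivalence of $\mcR(T_S)^{r_S}$ with a full subcategory of $\mcR(T)$, where $r_S$ records the wires introduced by the splittings. I then claim $\mcR(\tilde S)$ is equivalent to a full subcategory of $\mcR(T_S)^{r_S}$, via the map $\varphi$ of Lemma \ref{lem:subdiagram} extending a representation of $\tilde S$ by $\C$ on every outside wire and the ``all ones'' map on every outside vertex. To check $\varphi$ is full and faithful on isomorphism classes, take $G\in\GL(T_S)$ carrying $R_\varphi(T_S)$ to $R'_\varphi(T_S)$; after adjusting $G$ by the lift of an element of $\GL(\tilde S)$, which only replaces $R'$ by an isomorphic representation of $\tilde S$, we may assume $G$ is the identity on every wire of $\tilde S$. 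The wires of $T_S$ incident to the unique vertex $v$ of $\tilde S$ that do not belong to $\tilde S$ are precisely the dimension-one wires added by the splitting, so $G$ acts there by scalars, and the commuting square of $G$ at $v$ forces $R(e)=R'(e)$ for every wire $e$ of $\tilde S$ and $R(v)=\mu\, R'(v)$ for a single $\mu\in\C^{\times}$. Since $\tilde S$ is connected with a dangling wire, Fact \ref{fact:isom} makes this scaling an isomorphism of representations of $\tilde S$, so $R(\tilde S)\cong R'(\tilde S)$; fullness is checked exactly as in Lemma \ref{lem:subdiagram}. Composing the equivalences and invoking Lemma \ref{lem:orientationindependence} together with the wildness of the open claw and the needle then gives that $T$ is wild.

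The hard part is the figure eight: being closed it has no dangling wire of its own, so Fact \ref{fact:isom} does not apply to it directly, and — in contrast with the closed-diagram setting of Lemma \ref{lem:subdiagram} — the partial $\C^\times$-flow that an isomorphism $G$ induces on $T_S$ generally fails to extend across a two-loop vertex, its imbalance there being a product of flow values on the genuine dangling wires of $T_S$, which need not be $1$. The way around this is exactly the elementary observation in the second paragraph: once $T$ is connected and has a dangling wire, any figure eight inside it is automatically accompanied by a needle, so that obstruction never has to be confronted.
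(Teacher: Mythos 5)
Your proposal is correct and follows essentially the same route as the paper: reduce the figure eight to either a closed component (Corollary \ref{cor:closeddiagramswild}) or an accompanying needle, then embed $\mcR(S)$ for $S$ the open claw or needle by isolating $S$, applying Lemma \ref{lem:splitting} and the map $\varphi$ of Lemma \ref{lem:subdiagram}, and absorbing the residual scalar freedom from dangling wires via Fact \ref{fact:isom} together with the wildness results (Theorems \ref{thm:trivalentwild}, \ref{thm:needlewild}, Proposition \ref{prop:8wild}). The only deviation is cosmetic: where the paper decomposes $\GL(T_S)\cong\GL(H)\times\GL(\overline{H})$ and appeals to the flow-extension argument inside the proof of Lemma \ref{lem:subdiagram}, you perform the same check directly at the unique vertex of $\tilde{S}$, which is legitimate since $U$ is a singleton and the added wires are restricted to dimension one, so the flow step trivializes.
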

\begin{proof}
 First of all, we note that if the figure eight is a subdiagram, then either it is a connected component, or the the needle is also a subdiagram. If the figure eight is a connected component, we are finished by Theorem \ref{cor:closeddiagramswild}. So it suffices to show that if $T$ contains the open claw or needle as a subdiagram, $T$ is wild. 
 
 Let $S=(U,F)$ be the subdiagram of $T$ that is either the open claw or the needle. We consider the diagram $T_S$ as before, recalling that $\mcR(T_S)$ is equivalent to full subcategory of $\mcR(T)$ by Lemma \ref{lem:splitting}. We let $r=\{(e,1)|\; e\notin F\}$; we consider the map $\varphi:\mcR(S)\to \mcR(T_S)^r$ constructed for the proof of Lemma \ref{lem:subdiagram}, noting that it is well defined even if $S$ and $T$ have dangling wires.
 
 We know from the proof of Lemma \ref{lem:subdiagram} that the subgroup $\GL(\overline{H})\subseteq \GL(T_S)$, defined by acting by the identity on the wires $H$, preserves isomorphism classes of representations of $S$ under the image of $\varphi$. Let $\GL(H)$ be the subgroup of $\GL(T_S)$ that acts as the identity on all the wires in $E\setminus H$. We note that $\GL(T_S)\cong \GL(H)\times \GL(\overline{H})$. So we need to show that $\GL(H)$ preserves isomorphism classes of representations of $S$ under the image of $\varphi$. 
 
 If $H\subseteq F$, then this is trivial. So let us look at the action of $\GL(H)$ on the dangling wires outside of $F$, which acts by multiplication by scalars on each of the wires. By Fact \ref{fact:isom}, we need only consider the action on a single dangling wire outside of $F$. But as $U=\{c\}$, this is isomorphic to multiplying the tensor associated to $c$ by a scalar $\alpha$, again by Fact \ref{fact:isom}. Lastly, we note that if we have a representation $R(S)$, $\alpha R(S)\cong R(S)$ for all $\alpha\in\C^\times$ as both the needle and open claw have a dangling wire. So $\GL(H)$ preserves isomorphism classes of representations of $S$ in the image of $\varphi$.
 
 Thus we have that the map $\varphi$ realizes $\mcR(S)$ as a category equivalent to a full subcategory of $\mcR(T)$. Since $\mcR(S)$ is wild, this implies that $\mcR(T)$ is wild.
\end{proof}

As a consequence of Theorem \ref{thm:wildcases}, we have that a tensor diagram is wild if it has vertex of degree $\ge 3$, where a loop is counted as a degree two edge. In the next section, we will prove that these are all the wild tensor diagrams.

\section{Finite and Tame Tensor Diagrams}\label{sec:tameandfinite}

In the previous subsection, we showed that any tensor diagram containing a vertex with degree at least three is wild. We claim that these are all the wild tensor diagrams. The remaining tensor diagrams to consider are those with the following underlying semi-graphs:

\begin{enumerate}[(a)]
 \item The path $P_n$, which consists of vertices labeled $1,\dots,n$ and wires connecting vertex $i$ to $i+1$ for $i\in\{0,\dots,n-1\}$.
 \item The open path $A^0_n$, which is the path $P_{n+2}$ with the two leaves removed.
 \item The half-open path $A^1_n$, which is the path $P_{n+1}$ with one of the leaves removed.
 \item The loop $J_n$ which is formed by taking the path $P_n$ and adding a wire between vertices $1$ and $n$.
\end{enumerate}

As was mentioned in the Preliminaries, a closed tensor diagram $T$ is a morphism in $\Hom(\unit,\unit)$ in a finitely generated free compact closed monoidal category $\mcF$. As such, any representation of a closed tensor diagram is a morphism in $\Hom(\C,\C)\cong \C$. That is to say, there is a fundamental invariant of the representation. In fact, this invariant is a $\GL(T)$ invariant polynomial. This implies that are infinitely many non-isomorphic representations of $T$ as there is an invariant that can take any value in $\C$. As such, the only tensor diagrams that are eligible to be finite are those whose underlying semi-graph is the open or half-open path.

\begin{lemma}\label{lem:finitecases}
 The tensor diagrams with underlying semi-graph $A^0_n$ and $A^1_n$ are finite.
\end{lemma}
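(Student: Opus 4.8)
The plan is, in each case, to pick a convenient orientation via Lemma~\ref{lem:orientationindependence} and then write down the finite list of indecomposables explicitly. For $A^0_n$: orienting every wire consistently (say ``left to right''), each of its $n$ degree-two vertices acquires exactly one incoming and one outgoing wire, so a representation of $A^0_n$ is precisely a chain of vector spaces and linear maps $R(e_0)\to R(e_1)\to\cdots\to R(e_n)$, and the morphisms and direct sums of $\mcR(A^0_n)$ match those of quiver representations verbatim. Thus $\mcR(A^0_n)$ is equivalent to the category of finite-dimensional representations of the linearly oriented quiver of Dynkin type $A_{n+1}$, and Gabriel's theorem \cite{gabriel1972unzerlegbare} yields only finitely many indecomposables, namely the interval representations which are $k$ with identity transition maps on a set of consecutive wires and $0$ elsewhere. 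Hence $A^0_n$ is finite.

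For $A^1_n$ I again use Lemma~\ref{lem:orientationindependence} to orient the unique degree-one vertex so that it becomes a \emph{vector} $w$ in the first wire-space; a representation is then a chain $w\in V_1\xrightarrow{g_2}V_2\xrightarrow{g_3}\cdots\xrightarrow{g_n}V_n$. I split on whether $w=0$. If $w=0$, the direct sum acts on the spaces $V_i$ exactly as the direct sum of representations of the linear $A_n$ quiver, so such a representation is indecomposable if and only if the underlying chain $V_1\to\cdots\to V_n$ is an indecomposable $A_n$-representation; this contributes the $\binom{n+1}{2}$ interval chains carrying the zero vector.

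The substantive case is $w\neq 0$. Set $w_1:=w$ and $w_i:=g_i\cdots g_2(w)$, and let $b$ be the largest index with $w_b\neq 0$. The lines $\langle w_1\rangle,\dots,\langle w_b\rangle$ (with $g_i$ restricting to isomorphisms between consecutive ones and $g_{b+1}$ killing $\langle w_b\rangle$) span a subrepresentation $N_b$ carrying the nonzero vector $w$. The crux, which I expect to be the main obstacle since $\mcR(A^1_n)$ is highly non-semisimple, is to prove that $N_b$ is a direct summand of the whole representation. For this I would build a complement from the top down: pick any vector-space complement $C_b$ of $\langle w_b\rangle$ in $V_b$, set $C_i:=V_i$ for $i>b$, and $C_{i-1}:=g_i^{-1}(C_i)$ for $i\le b$; a downward induction using $w_i\notin C_i$ (with base case $w_b\notin C_b$) then shows $V_{i-1}=\langle w_{i-1}\rangle\oplus C_{i-1}$ and that every $g_i$ respects this splitting, giving a decomposition $N_b\oplus C_\bullet$ in which $C_\bullet$ carries the zero vector. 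Consequently an indecomposable with $w\neq 0$ must equal one of the $n$ chains $N_1,\dots,N_n$, so $\mcR(A^1_n)$ has only $\binom{n+1}{2}+n$ indecomposables and $A^1_n$ is finite.
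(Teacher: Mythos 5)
Your proof is correct, and while the $A^0_n$ half coincides with the paper's argument (relabel wires as quiver vertices and diagram vertices as arrows, identify $\mcR(A^0_n)$ with representations of the linearly oriented $A_{n+1}$ quiver, invoke Gabriel), your treatment of $A^1_n$ takes a genuinely different route. The paper handles $A^1_n$ by mapping $\mcR(A^1_n)$ into $A_{n+1}$-Mod with the extra quiver vertex forced to carry $\C$ and then transferring finiteness from Gabriel's theorem; you instead work intrinsically with the category of ``chains with a marked vector'' $w\in V_1\to\cdots\to V_n$ and classify its indecomposables directly. Your key step --- splitting off the uniserial subobject $N_b$ spanned by the lines $\langle w_1\rangle,\dots,\langle w_b\rangle$ by building a complement from the top down, $C_b$ any complement of $\langle w_b\rangle$ and $C_{i-1}=g_i^{-1}(C_i)$ --- checks out: the downward induction gives $V_{i-1}=\langle w_{i-1}\rangle\oplus C_{i-1}$ (the intersection is trivial because $\alpha w_i\in C_i$ forces $\alpha=0$, and spanning needs no surjectivity of $g_i$), the maps respect the splitting by construction, and $g_{b+1}$ kills $\langle w_b\rangle$ by the choice of $b$. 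What the two approaches buy: the paper's reduction is shorter and uniform with its treatment of $A^0_n$ and $J_n$, but it leans on the inclusion into $A_{n+1}$-Mod being a full subcategory whose image is not closed under direct summands (the summand with $0$ at the distinguished vertex leaves the image), so deducing finiteness requires a little more care than the one-line citation suggests; your argument is self-contained, sidesteps that transfer entirely, and moreover delivers the explicit finite list of indecomposables ($\binom{n+1}{2}$ interval chains with $w=0$ plus the $n$ pointed chains $N_1,\dots,N_n$), which anticipates the paper's later subsection on classifying indecomposables of the finite and tame diagrams.
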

\begin{proof}
 We shall first show that $\mcR(A^0_n)$ is equivalent to $A_{n+1}-$Mod, where $A_{n+1}$ is the quiver associated with the Dynkin diagram $A_{n+1}$, whose orientation we shall specify shortly. We may assume that all wires in $A^0_n$ are oriented in the same direction by Lemma \ref{lem:orientationindependence}; similarly for the quiver $A_{n+1}$. Let $R(A^0_n)\in\mcR(A^0_n)$. We construct a map $\varphi$ taking $R(A^0_n)$ to a representation $R_\varphi(A_{n+1})$.
 
 Let the wires in $A^0_n$ be labeled in sequence $e_1,\dots,e_{n+1}$. Let $q_{1},\dots,q_{n+1}$ be the vertices of $A_{n+1}$ in sequence. Then we define $R_\varphi(q_i)=R(e_i)$. Let $p_1,\dots,p_n$ be the arcs of $A_{n+1}$ listed in sequence and $v_1,\dots,v_n$ be the vertices of $A^0_n$ listed in sequence. Then define $R_\varphi(p_i)=R(v_i)$. It is clear that $\varphi$ defines a equivalence of categories, in fact an isomorphism. Since $A_{n+1}$ is finite by \cite{gabriel1972unzerlegbare}, $A^0_n$ is also finite.
 
 Now we show that $\mcR(A^1_n)$ is equivalent to a full subcategory of $A_{n+1}$-mod by constructing a map $\varphi$. Let $R(A^1_n)\in\mcR(A^1_n)$. As before, we assume the orientation of all the wires in $A^1_n$ and $A_{n+1}$ are in the same direction. We let $e_1,\dots,e_n$ be the wires of $A^1_n$ in sequence, with $e_1$ the dangling wire,  and $p_1,\dots,p_n$ be the arcs of $A_{n+1}$ in sequence. We let $v_1,\dots,v_n$ be the vertices of $A^1_n$ in sequence and $q_1,\dots,q_{n+1}$ be the vertices of $A_{n+1}$ in sequence. For $i\in[n]$, we define $R_\varphi(q_i)=R(e_i)$ and define $R_\varphi(q_{n+1})=\C$. Then for $i\in[n]$, we define $R_\varphi(p_i)=R(v_i)$.  This clearly shows that $\mcR(A^1_n)$ is a full subcategory of $A_{n+1}$-Mod and so $A^1_n$ is finite.
\end{proof}

We know that the tensor diagrams whose underlying semi-graph is the path $P_n$ or loop $J_n$ are not finite as they are closed. We know from Lemma \ref{lem:subdiagram} that it is sufficient to show that $J_n$ is tame as any tensor diagram with underlying semi-graph $\mcR(P_n)$ is equivalent to a full subcategory of a tensor diagram with underlying semi-graph $\mcR(J_n)$. 

\begin{lemma}\label{lem:tamecases}
 The category $\mcR(T)$, where $T$ has underlying semi-graph $J_n$, is equivalent to the category $\tilde{A}_{n-1}$-Mod, where $\tilde{A}_{n-1}$ is the affine Dynkin diagram.
\end{lemma}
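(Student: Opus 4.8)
The plan is to mimic the argument of Lemma~\ref{lem:finitecases}, constructing an explicit equivalence of categories rather than merely an inclusion. First I would normalize the orientation: by Lemma~\ref{lem:orientationindependence} we may assume every wire of $T$ (with underlying semi-graph $J_n$) is oriented in the same cyclic direction, and correspondingly we orient the affine quiver $\tilde A_{n-1}$ cyclically. Label the wires of $T$ in sequence as $e_1,\dots,e_n$ (with $e_n$ the wire closing the loop from vertex $n$ back to vertex $1$) and the vertices $v_1,\dots,v_n$; label the vertices of $\tilde A_{n-1}$ as $q_1,\dots,q_n$ arranged in a cycle and its arcs $p_1,\dots,p_n$ with $p_i\colon q_i\to q_{i+1}$ (indices mod $n$). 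The functor $\varphi\colon \mcR(T)\to \tilde A_{n-1}\text{-Mod}$ sends $R(T)$ to the representation with $R_\varphi(q_i)=R(e_i)$ and $R_\varphi(p_i)=R(v_i)\in\Hom(R(e_i),R(e_{i+1}))$.

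The key steps, in order, are: (1) check $\varphi$ is well-defined on objects --- this is immediate since each vertex $v_i$ of $J_n$ has exactly one incoming and one outgoing wire, so $R(v_i)$ is literally a linear map between the two associated vector spaces, matching the datum of an arrow in a type-$\tilde A$ quiver; (2) check $\varphi$ is well-defined on morphisms --- a morphism $\Phi\colon R_1(T)\to R_2(T)$ is by definition a collection $\varphi_{e_i}$ making the square at each $v_i$ commute, which is exactly the commuting-square condition defining a morphism of quiver representations, so we set $\varphi(\Phi)$ to be the same tuple of maps; (3) check functoriality --- identities and composition are preserved since both are computed wire-by-wire / vertex-by-vertex; (4) exhibit the inverse functor by the obviously symmetric construction (send $q_i\mapsto e_i$, $p_i\mapsto v_i$), and note $\varphi$ and its inverse are mutually inverse on objects and morphisms, so $\varphi$ is an isomorphism --- in particular an equivalence --- of categories. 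I would then remark that $\tilde A_{n-1}$ is tame by the classical theory (Donovan--Freislich / Nazarova, or as cited via the Drozd dichotomy and Gabriel's work referenced earlier), hence $\mcR(T)$ is tame, completing the promised trichotomy together with Lemmas~\ref{lem:finitecases} and the wildness results.

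The main obstacle is essentially bookkeeping rather than a deep point: one must be careful that the single loop wire $e_n$ is handled symmetrically with the others and that no dimension restriction or basis choice sneaks in --- unlike the subdiagram lemmas, here there are no added wires constrained to dimension $1$, so $\varphi$ is a genuine isomorphism of categories and every dimension vector $\bfd$ of $T$ corresponds to a dimension vector of $\tilde A_{n-1}$ with the same total. A secondary subtlety is the orientation reduction: Lemma~\ref{lem:orientationindependence} lets us reverse individual wires of $T$, and one should note that the analogous statement for quivers (reversing an arrow of $\tilde A_{n-1}$ preserves the module category up to equivalence, since all orientations of an affine type-$A$ quiver are derived-equivalent and in fact have equivalent module categories for the purposes of the tame/finite/wild classification) justifies fixing the cyclic orientation on both sides; alternatively one simply states the equivalence for the chosen matched orientations and invokes Lemma~\ref{lem:orientationindependence} to transfer to all others.
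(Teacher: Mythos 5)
Your proposal is correct and follows essentially the same route as the paper: the paper's proof is exactly the wire-to-vertex, vertex-to-arc dictionary $R_\varphi(q_i)=R(e_i)$, $R_\varphi(p_i)=R(v_i)$ after using Lemma~\ref{lem:orientationindependence} to fix a common orientation, yielding an isomorphism of categories and tameness from the known tameness of the $\tilde A_{n-1}$ quiver. The only caveat is your parenthetical claim that all orientations of an affine type-$A$ quiver have equivalent module categories, which is not quite accurate (the cyclic orientation even has an infinite-dimensional path algebra), but your stated fallback---proving the equivalence for matched orientations and transferring via Lemma~\ref{lem:orientationindependence}---is precisely what the paper does, so nothing is lost.
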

\begin{proof}
 We construct a map $\varphi:\mcR(T)\to\tilde{A}_{n-1}$-Mod as follows. Let $R(T)\in \mcR(T)$. We may assume that the orientation of the wires of $J_n$ and $\tilde{A}_{n-1}$ are all in the same direction by Lemma \ref{lem:orientationindependence}. Let $v_1,\dots,v_n$ be the vertices of $J_n$ in sequence and $q_1,\dots,q_n$ be the vertices of $\tilde{A}_{n-1}$ in sequence. Similarly let $e_1,\dots,e_n$ be the wires of $J_n$ in sequence and $p_1,\dots,p_n$ be the arcs of $\tilde{A}_{n-1}$ in sequence. Then we define $R_\varphi(p_i)=R(v_i)$ and $R_\varphi(q_i)=R(e_i)$ for all $i\in[n]$. This clearly defines an equivalence, and indeed an isomorphism of the categories $\tilde{A}_{n-1}$-Mod and $\mcR(T)$. Since the quiver $\tilde{A}_{n-1}$ is tame (cf. \cite{brion2008representations}), so is $T$.
\end{proof}

\noindent This concludes our classification, giving us the following theorem:

\begin{theorem}\label{thm:trichotomy}
 A connected tensor diagram is
 \begin{enumerate}[(a)]
  \item finite if and only if it its underlying semi-graph is either $A^0_n$ or $A^1_n$,
  \item tame (but not finite) if and only if its underlying semi-graph is either $P_n$ or $J_n$,
  \item wild otherwise.
 \end{enumerate}
\end{theorem}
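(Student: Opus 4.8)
The plan is to assemble the results already established. The one new ingredient needed is the combinatorial remark that a connected tensor diagram $T$ in which no vertex has degree $\ge 3$ — a loop contributing two to the degree of its vertex — has underlying semi-graph isomorphic to one of $A^0_n$, $A^1_n$, $P_n$, or $J_n$. First I would verify this: a connected graph with all degrees $\le 2$ is either a single vertex carrying one loop (which is $J_1$), a cycle (which is $J_n$), or a simple path; along a simple path every internal vertex already has degree two, and each endpoint has exactly one free degree slot which, if used at all, must be filled by a dangling wire (using it otherwise would lengthen the path, close it into a cycle, or create a vertex of degree three); according as $0$, $1$, or $2$ endpoints carry a dangling wire one obtains $P_n$, $A^1_n$, or $A^0_n$. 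Only the smallest members — an isolated vertex, a vertex with one or two dangling wires — need to be checked separately as degenerate cases.

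Next I would settle part (c), the wild direction. If a connected $T$ has a vertex $v$ of degree $\ge 3$: when $v$ carries two loops, the subdiagram on $\{v\}$ together with two of them is the figure eight; when $v$ carries one loop and at least one further incident half-edge, the subdiagram on $\{v\}$ with the loop and one such half-edge (which is dangling inside the subdiagram) is the needle; otherwise $v$ is incident to at least three non-loop half-edges and the subdiagram on $\{v\}$ with three of them is the open claw. In every case $T$ contains one of the diagrams of Figure \ref{fig:forbiddensubdiagrams} as a subdiagram, possibly with some wires reoriented, which is harmless by Lemma \ref{lem:orientationindependence}; so $T$ is wild by Theorem \ref{thm:wildcases}. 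Combined with the combinatorial remark, a connected $T$ with no vertex of degree $\ge 3$ has underlying semi-graph $A^0_n$, $A^1_n$, $P_n$, or $J_n$, and the next paragraph shows each of these is tame or finite, so the wild connected diagrams are exactly those with a vertex of degree $\ge 3$.

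For (a) and (b), Lemma \ref{lem:finitecases} gives that $A^0_n$ and $A^1_n$ are finite, and a finite diagram is trivially tame; so these are precisely the finite connected diagrams, giving (a). For $J_n$, Lemma \ref{lem:tamecases} identifies $\mcR(J_n)$ with $\tilde A_{n-1}$-Mod, so $J_n$ is tame, and $\mcR(P_n)$ is equivalent to a full subcategory of $\mcR(J_n)$ (as noted before Lemma \ref{lem:tamecases}), so $P_n$ is tame too. Neither is finite: being closed, such a diagram has a fundamental $\GL(T)$-invariant — its value as a morphism in $\Hom(\C,\C)\cong\C$ — and already for the dimension vector $\bfd=(1,\dots,1)$ the representations all of whose structure maps are nonzero are indecomposable and form an infinite family separated by this invariant. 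Hence $P_n$ and $J_n$ are tame but not finite, which is (b). The three classes are pairwise disjoint: a wild diagram has, by the remark following the definition of wildness, $n$-parameter families of indecomposables for every $n$ and hence is neither finite nor tame, and ``finite'' and ``tame but not finite'' are disjoint by definition.

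I do not expect a serious obstacle, since the content is carried by the earlier lemmas; the points requiring care are making the enumeration of connected semi-graphs of maximum degree $\le 2$ genuinely exhaustive (the degenerate small diagrams), and, for $P_n$ and $J_n$, producing an honest infinite family of indecomposable representations — not merely infinitely many isomorphism classes — so that ``not finite'' holds in the sense of the definition.
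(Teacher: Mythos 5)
Your proposal is correct and follows essentially the same route as the paper: the paper's proof is a direct assembly of Theorem \ref{thm:wildcases} with Lemmas \ref{lem:finitecases} and \ref{lem:tamecases}, with the enumeration of connected semi-graphs of maximum degree two and the non-finiteness of closed diagrams (via the fundamental $\Hom(\C,\C)\cong\C$ invariant) stated in the preamble of Section \ref{sec:tameandfinite}. You merely spell out those preamble observations in more detail, including a slightly more careful argument that the infinitely many non-isomorphic representations of $P_n$ and $J_n$ can be taken indecomposable.
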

\begin{proof}
 This follows directly from Theorem \ref{thm:wildcases} and Lemmas \ref{lem:finitecases} and \ref{lem:tamecases}.
\end{proof}

\subsection{Classifying indecomposable representations of finite and tame tensor diagrams} In the proofs of Lemmas \ref{lem:finitecases} and \ref{lem:tamecases}, we saw that $\mcR(A^0_n)$ was isomorphic to $A_{n+1}$-Mod, where $A_{n+1}$ is the Dynkin diagram. Similarly $\mcR(J_n)$ was isomorphic to $\tilde{A}_n$-Mod, where $\tilde{A}_n$ is the affine Dynkin diagram. The indecomposable representations of any quiver whose underlying graphs are these Dynkin diagrams has been well worked out and gives a classification of the indecomposable representations of $A^0_n$ and $J_n$.

Let us first consider the the indecomposable representations of the quivers with underlying graph the Dynkin diagram $A_n$, which is the path on $n$ vertices. Given any orientation on $A_n$, we call a representation of $A_n$, $R(A_n)$, \emph{thin} if $\dim(R(v))=0,1$ for every vertex $v$. We say that the representation of $R(A_n)$ is \emph{connected} if the underlying graph of $A_n$ formed by deleting those vertices $v$ with $\dim(R(v))=0$ and arcs $e$ with $R(e)=0$, is connected.

\begin{theorem}[\cite{gabriel1972unzerlegbare}]\label{thm:thinandconnected}
 A representation of $A_n$ is indecomposable if and only if it is thin and connected.
\end{theorem}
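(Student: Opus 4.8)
The plan is to prove both implications by reducing everything to \emph{interval representations}. For $1 \le a \le b \le n$ let $I[a,b]$ be the representation with $R(v) = k$ for $a \le v \le b$, $R(v) = 0$ otherwise, and $R(p) = \id_k$ on each arc $p$ both of whose endpoints lie in $[a,b]$ (forced to be $0$ on the remaining arcs). Since a nonzero scalar on a one-dimensional wire is invertible and may be rescaled to the identity by a change of basis, every thin connected representation is isomorphic to exactly one $I[a,b]$. Thus the theorem is equivalent to the assertion that the indecomposable representations of $A_n$ are exactly the $I[a,b]$, and I would establish this as follows.

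For the backward direction it is enough to show each $I[a,b]$ is indecomposable. An endomorphism of $I[a,b]$ is a tuple $(\lambda_v)_{a \le v \le b}$ of scalars, and the commuting square attached to each interior arc (whose map is $\id_k$) forces the scalars at its two endpoints to coincide; hence $\End(I[a,b]) \cong k$. A field has no idempotents besides $0$ and $1$, so $I[a,b]$ has no nontrivial direct summand.

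For the forward direction, the real content is the claim that \emph{every} representation $R$ of $A_n$ decomposes as a finite direct sum of interval representations; granting it, if $R$ is indecomposable then this sum has a single (nonzero) term, so $R \cong I[a,b]$ is thin and connected. I would prove the claim by induction on $n$, the case $n=1$ being the splitting of a vector space into lines. For the step, pick an end vertex $v_n$ of $A_n$ and let $\rho$ be its unique incident arc; by the induction hypothesis $R$ restricted to $A_{n-1} = A_n \setminus \{v_n\}$ is $\bigoplus_j I[a_j,b_j]$, so $R(v_{n-1})$ is identified with $\bigoplus_{j :\, b_j = n-1} k$. One then re-attaches $v_n$ along the linear map $R(\rho)$ and, using changes of basis at $v_n$ together with the changes of basis of $R|_{A_{n-1}}$ coming from its own automorphisms, brings $R(\rho)$ to a normal form that is a direct sum of zero maps and identifications of coordinate lines; reading off the summands exhibits $R$ as a direct sum of interval representations: the $I[a_j,b_j]$ with $b_j < n-1$ are carried along unchanged, each $I[a_j, n-1]$ either survives or is replaced by $I[a_j, n]$, and there may be additional copies of the length-one interval at $v_n$.

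The step I expect to be the main obstacle is precisely this re-attachment: one cannot use arbitrary row/column operations on $R(\rho)$, only those compatible with the fixed decomposition of $R|_{A_{n-1}}$. What makes it work is that $\Hom(I[a,n-1], I[a',n-1])$ is one-dimensional exactly when the two supports are nested at the $v_n$-end, which supplies the triangular base changes needed to clear entries of $R(\rho)$, while a complement of the span of the relevant images (resp.\ kernels) supplies the rest. Alternatively, one may first use reflection functors to reduce to the linearly oriented quiver $v_1 \to \cdots \to v_n$, where the bookkeeping is cleanest, or deduce the classification from positive-definiteness of the Tits form of type $A_n$ together with the general theory of \cite{gabriel1972unzerlegbare}.
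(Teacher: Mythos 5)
Your outline is essentially correct, but it is a genuinely different route from the paper's: the paper does not prove Theorem \ref{thm:thinandconnected} at all; it quotes Gabriel's classification, noting that the statement follows from the bijection between indecomposables and positive roots of the $A_n$ root system (equivalently from the Tits form), and points to Ringel's short proof. Your argument is the elementary alternative: identify thin connected representations with interval representations $I[a,b]$, observe $\End(I[a,b])\cong k$ (hence indecomposability over a field, which has no nontrivial idempotents), and prove by induction on $n$ that every representation splits into intervals. The plan is sound, and the re-attachment step you flag is indeed where all the work lies, but your stated criterion there needs fixing: since all relevant summands $I[a,n-1]$ share the endpoint $n-1$, their supports are automatically nested, so ``nested at the $v_n$-end'' does not select anything. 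The correct statement is that for each pair exactly one of $\Hom(I[a,n-1],I[a',n-1])$ and $\Hom(I[a',n-1],I[a,n-1])$ is one-dimensional, the direction being determined by the orientation of the arc at the left boundary of the shorter interval, and every such nonzero morphism restricts to a nonzero scalar at the vertex $n-1$. This last fact makes composites nonzero at that vertex, so the relation is transitive and totally orders the distinct intervals through $n-1$; consequently the image of $\operatorname{Aut}(R|_{A_{n-1}})$ in $\GL(R(v_{n-1}))$ contains the full block-triangular subgroup for that order, and together with arbitrary base change at $v_n$ this is exactly what allows the Gaussian-type reduction of $R(\rho)$ to a $0/1$ normal form and the resulting splitting into $I[a_j,b_j]$, $I[a_j,n]$, and copies of $I[n,n]$. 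As for what each approach buys: yours is self-contained, works over any field, and is the argument that generalizes to interval decompositions of pointwise finite-dimensional representations (persistence-module style); the paper's citation buys the uniform statement for all simply laced Dynkin types, with the dimension vectors of indecomposables identified with positive roots and no matrix normal-form bookkeeping.
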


This theorem follows from Gabriel's result that the indecomposable representation of a simply laced Dynkin diagram are in correspondence with the positive roots of the root system defined by the Dynkin diagram and the connection of these positive roots with the Tits form associated to a quiver. A short proof ot Theorem \ref{thm:thinandconnected} can be found here \cite{ringel2013representations}.

In the proof of Lemma \ref{lem:finitecases}, we showed that the representations of $A^1_n$ were in bijection with the representations of $A_{n+1}$ where one of the leaves of $A_{n+1}$ was forced to always be associated to $\C$. If $q_1,\dots,q_{n+1}$ are the vertices of $A_{n+1}$ in sequence, let us take the convention that it is $q_{n+1}$ that must be dimension one. This gives us the following corollary.

\begin{corollary}
 Up to isomorphism, the indecomposable representations of $A^1_n$ are in bijection with the thin and connected representations of $A_{n+1}$, $R(A_{n+1})$, where $\dim(R(q_{n+1}))=1$.
\end{corollary}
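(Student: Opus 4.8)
The plan is to bootstrap directly off the proof of Lemma~\ref{lem:finitecases}, where we built a functor $\varphi$ presenting $\mcR(A^1_n)$ as (equivalent to) the full subcategory $\mathcal{D}\subseteq A_{n+1}\text{-Mod}$ of those representations $R(A_{n+1})$ with $\dim R(q_{n+1})=1$, the $q_1,\dots,q_{n+1}$ being the vertices of the linearly oriented $A_{n+1}$ quiver in sequence. Granting this, the corollary reduces to identifying the indecomposable objects of $\mathcal{D}$, and the engine is Gabriel's classification in the form already recorded as Theorem~\ref{thm:thinandconnected}.

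First I would pin down indecomposability across $\varphi$: an indecomposable representation of $A_{n+1}$ that happens to satisfy $\dim R(q_{n+1})=1$ remains indecomposable when restricted to $\mathcal D$, and conversely an object indecomposable in $\mathcal D$ is indecomposable in $A_{n+1}\text{-Mod}$. The key observation is that $q_{n+1}$ is a leaf of $A_{n+1}$ whose dimension is pinned to $1$: the commuting square at the corresponding leaf vertex $v_n$ of $A^1_n$ leaves no room for a nontrivial extra scalar endomorphism at $q_{n+1}$ (after normalizing it to $1$), so the endomorphism algebra of $R(A^1_n)$ is recovered from that of $\varphi\big(R(A^1_n)\big)$, and locality of one is locality of the other; Fitting's lemma / Krull--Schmidt then transfers indecomposability in both directions. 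Hence the indecomposables of $\mathcal D$ are exactly the indecomposable $R(A_{n+1})$ with $\dim R(q_{n+1})=1$.

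Finally I would apply Theorem~\ref{thm:thinandconnected}: the indecomposables of $A_{n+1}$ are precisely the thin connected representations (concretely, the interval representations), so imposing $\dim R(q_{n+1})=1$ — i.e.\ keeping only the intervals that reach the endpoint $q_{n+1}$ — leaves exactly the thin connected $R(A_{n+1})$ with $\dim R(q_{n+1})=1$. Transporting back through the equivalence $\varphi$ of Lemma~\ref{lem:finitecases} then gives the asserted bijection, with the indecomposable $R(A^1_n)$ matching the indecomposable summand of $\varphi\big(R(A^1_n)\big)$ supported at $q_{n+1}$. The step I expect to cause the most trouble is the middle one: making the preservation of indecomposability precise, since $\mathcal D$ is \emph{not} closed under direct sums inside $A_{n+1}\text{-Mod}$, so one must either argue at the level of endomorphism rings as above or describe carefully the direct-sum operation that $\varphi$ actually transports onto $\mathcal D$. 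Everything else is bookkeeping on top of Lemma~\ref{lem:finitecases} and Theorem~\ref{thm:thinandconnected}.
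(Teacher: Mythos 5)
Your proposal is correct and follows essentially the same route as the paper: the paper's proof is exactly the combination of the embedding from Lemma~\ref{lem:finitecases} (representations of $A^1_n$ as representations of $A_{n+1}$ with $\dim R(q_{n+1})=1$) with Gabriel's classification in Theorem~\ref{thm:thinandconnected}. Your additional care about transferring indecomposability across $\varphi$ (since the subcategory with $\dim R(q_{n+1})=1$ is not closed under direct sums in $A_{n+1}$-Mod) is a point the paper leaves implicit, but it does not change the approach.
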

\begin{proof}
 This follows directly from the proof of Lemma \ref{lem:finitecases} and Theorem \ref{thm:thinandconnected}.
\end{proof}

We now look to the determining the indecomposable representations of $P_n$. Since it is a subdiagram of $J_n$, we know that its indecomposable representations must take the form of indecomposable representations of the quiver $\tilde{A}_{n+1}$. Looking at the proofs of Lemma \ref{lem:splitting} and Lemma \ref{lem:tamecases}, we see that the representations of $P_n$ correspond to representations of $\tilde{A}_{n+1}$ where at least one of the matrices associated to an arc has rank one.

We now recall the classification of indecomposable representations of $\tilde{A}_{n+1}$. Let us denote the vertices of $\tilde{A}_{n+1}$ in sequence by $q_1,\dots,q_{n+1}$, and its arcs in sequence by $p_1,\dots,p_{n+1}$. We will consider $\tilde{A}_{n+1}$ with the orientation such that the arc $p_i$ leaves $q_i$ and enters $q_{i+1}$. Let us consider a representation $R(\tilde{A}_{n+1})$, and let us define the matrix $L=\prod_{i=1}^{n+1}{R(p_{i})}\in\End(R(q_1))$.

We may assume that $L$ is in Jordan canonical form by performing the appropriate change of basis on $R(q_1)$. Let $\lambda$ be an eigenvalue of $L$ with eigenvector $v_1$. Then we define $v_{i+1}=R(p_i)v_i$ for $i=2,\dots,n$. If $\lambda=0$, then for some $i\in[n+1]$, $R(p_i)v_i=0$. Then consider the representation on $\tilde{A}_{n+1}$ given by $R'(p_j)=0$ and $R'(q_j)=\{0\}$ for $j\ne i$, and $R'(p_i)=R(p_i)$ and $R'(q_i)=\C v_i$. This is a simple subrepresentation of $R(\tilde{A}_{n+1})$ which we denote $V_{0,i}$.

Otherwise, if $\lambda\ne0$, then we get the following induced subrepresentation: $R'(p_i)=\C v_i$ for all $i\in[n+1]$ which is isomorphic the representation where $R''(p_i)=\C$ for all $i\in[n+1]$, $R''(q_i)=1$ for all $i\in[n]$, and $R''(q_{n+1})=\lambda$. This is also a simple representation which we denote $V_{\lambda}$.

Given a representation of $P_n$, we may view it as a representation of $\tilde{A}_{n+1}$ where the matrix $L$ is rank one. If $L$ is diagonalizable, then it has a single eigenvalue $c$ and it is isomorphic to one of the above simple representations. Otherwise, $L$ has only zero eigenvalues, and a single Jordan block of size two. 

Let $R(\tilde{A}_{n+1})$ be a representation where $L$ has this form. We know that $R(q_1)$ is two dimensional with basis vectors $v_1,w_1$ such that $Lv_1=0$ and $Lw_1=v_1$. We define $v_i$ and $w_i$ as before. The fact that $Lv_1=0$ implies that for some $i\in [n+1]$, $R(p_i)v_i=0$. For that same $i$, $R(p_i)w_i$ maps to a non-zero vector in $R(q_{i+1})$. We see that the dimension of all vector spaces $R(q_j)$ for $1\le j\le i$, there is a two-dimensional subspace with $v_j$ and $w_j$ forming a basis. For $j>i$, we have a subspace of $R(q_j)$ spanned by $w_j$. So we have a subrepresentation given by $R'(q_j)=\tn{span}\{v_j,w_j\}$ for $j\in[n+1]$ (noting that $v_j=0$ for $j>i$) with the induced maps sending $v_j\to v_{j+1}$ $w_j\to w_{j+1}$ associated to $R(p_i)$. It is clear that this module is indecomposable by the fact that the Jordan matrix of size two with eigenvalue zero cannot be decomposed as a non-trivial matrix direct sum. We denote this representation by $W_{i}$. This proves the following proposition.

\begin{proposition}
 Every indecomposable representation of $P_n$ is isomorphic to one of either $V_{0,i}$, $V_{\lambda}$, or $W_{i}$, for $\lambda\in\C\setminus\{0\}$, $i\in[n+1]$.
\end{proposition}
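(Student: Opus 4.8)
The plan is to push the whole problem into affine quiver theory and then read off the answer from the structure of $L$. First I would make precise the reduction already hinted at: combining Lemma~\ref{lem:splitting} (recovering $P_n$ from the splitting that adds one dimension‑one wire) with the equivalence of Lemma~\ref{lem:tamecases}, and using Fact~\ref{fact:isom} to absorb the residual scalar freedom on the added wire, one gets that $\mcR(P_n)$ is equivalent to the full subcategory of $\tilde{A}_{n+1}$-Mod (with the conventions fixed above: vertices $q_1,\dots,q_{n+1}$, arcs $p_1,\dots,p_{n+1}$) consisting of those $R$ for which the monodromy $L=\prod_{i=1}^{n+1}R(p_i)\in\End(R(q_1))$ has $\rank L\le 1$. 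Because the monodromy of $R_1\oplus R_2$ is $L_1\oplus L_2$, so that $\rank L=\rank L_1+\rank L_2$, this subcategory is closed under direct summands, and it therefore suffices to list its indecomposable objects.

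So fix an indecomposable $R$ in this subcategory. Acting by $\GL$ on the wire $e_1$ lets me put $L$ in Jordan normal form on $R(q_1)$, and $\rank L\le 1$ forces one of exactly three shapes: (i) a single $1\times 1$ block with nonzero eigenvalue $\lambda$ plus a zero block; (ii) $L=0$; (iii) a single $2\times 2$ nilpotent block plus a zero block. In each case I pick a vector $v_1$ at the bottom of the distinguished block (and in (iii) also $w_1$ with $Lw_1=v_1$) and propagate it around the cycle, $v_{i+1}:=R(p_i)v_i$ (and likewise $w_{i+1}$). In case (i) every $v_i$ is nonzero and each $R(p_i)$ restricts to an isomorphism $\C v_i\to\C v_{i+1}$, so the lines $\C v_i$ assemble into a subrepresentation isomorphic to $V_\lambda$; in case (ii) the chain must stop, $R(p_i)v_i=0$ for some $i$, and $\C v_i$ at $q_i$ is the simple subrepresentation $V_{0,i}$; in case (iii) the $v_i$ chain stops at some $i$ while the $w_j$ persist, and the spaces $\tn{span}\{v_j,w_j\}$ form the subrepresentation $W_i$, which is indecomposable because a $2\times 2$ nilpotent Jordan block has no nontrivial direct‑sum decomposition.

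The main obstacle is the last step: upgrading ``$R$ contains one of $V_\lambda,V_{0,i},W_i$'' to ``$R$ equals it'', after which one finishes by induction on $\dim R$. I would do this by exhibiting a complementary subrepresentation — propagating around the cycle a fixed $L$‑invariant complement of the distinguished (generalized) eigenspace of $L$ in $R(q_1)$ — and verifying that the two subrepresentations give an honest direct‑sum decomposition $R=R'\oplus R''$, whence indecomposability forces $R''=0$. Checking that this propagated complement is genuinely a representation‑theoretic complement, i.e.\ that the structure maps $R(p_i)$ split compatibly all the way around, is the technical heart of the argument and is exactly where the rank‑one constraint is used essentially; alternatively one can invoke the classification of indecomposables of $\tilde{A}_{n+1}$-Mod, together with the $\Hom$- and $\operatorname{Ext}^1$-orthogonality of its tubes, to confine such an $R$ to a single tube and then pin it down by the condition $\rank L\le 1$.
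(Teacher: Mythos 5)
Your construction of the three families coincides with the paper's: put $L$ in Jordan form, use the rank restriction to limit its shape to the three cases, propagate (generalized) eigenvectors around the cycle, and obtain $V_{\lambda}$, $V_{0,i}$, $W_i$. The genuine problem is the reduction on which you base everything, namely that $\mcR(P_n)$ is equivalent to the full subcategory of $\tilde{A}_{n+1}$-Mod cut out by $\rank L\le 1$ and that one may therefore classify indecomposables there. This is not an equivalence of additive categories: the two end vertices of $P_n$ carry the \emph{fixed} one-dimensional space $\C$ (the empty tensor product), and this space does not add when you form direct sums in $\mcR(P_n)$. Consequently the passage to cycle-quiver representations does not commute with $\oplus$, the monodromy of a direct sum taken in $\mcR(P_n)$ still factors through a one-dimensional space (so its rank stays $\le 1$, it is not $\rank L_1+\rank L_2$), and indecomposability does not transfer in either direction. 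Already for $P_2$ (one wire carrying $V$, a vector $u$ and a covector $\phi$): the representation $V=\C^2$, $u=e_1$, $\phi=e_2^{*}$ decomposes in $\mcR(P_2)$ as $(\C,1,0)\oplus(\C,0,1)$, yet its image in the cycle quiver is an indecomposable string module with nilpotent rank-one monodromy; conversely, $(\C,1,0)$ and $(\C,0,1)$ are indecomposable in $\mcR(P_2)$ while their images in the quiver category are decomposable.

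This defect undermines both of your proposed ways of finishing. The subcategory $\{\rank L\le 1\}$ genuinely contains indecomposables that are not on the list: a string module based at a vertex $q_j$ with $j\ne 1$ that winds exactly once around the cycle (dimension two at $q_j$, one at every other vertex) is indecomposable, has $L=0$ at $q_1$, and is isomorphic to none of $V_{0,i}$, $V_{\lambda}$, $W_i$. So ``classify the indecomposables of the subcategory'' is attempting to prove a false intermediate statement, and for precisely these objects the complementary subrepresentation you want to propagate does not exist: they properly contain the subrepresentation your propagation produces and admit no splitting in the quiver category. The step from ``$R$ contains $V_{\lambda}$, $V_{0,i}$ or $W_i$'' to ``$R$ equals it'' --- which, to be fair, the paper's own argument also leaves essentially implicit --- has to be carried out inside $\mcR(P_n)$ itself, where the end copies of $\C$ are rigid and the splitting must be compatible with the fixed vector $u$ and covector $\phi$ (for instance, when $\phi(u)\ne 0$ one splits the wire spaces along the propagated line through $u$ against the propagated kernel of $\phi$). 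As written, your argument never returns from the quiver category to $\mcR(P_n)$, and that return is where the actual content of the proposition lies.
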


\section{Conclusion}

Tensor networks are an important and widely used tool in physics, computer science, and statistics. In many cases, one is faced with determining when two tensor networks can be related by an element of $\GL(T)$, where $T$ is the underlying tensor diagram. In this paper, we have shown that any classification scheme of orbits is an intractable problem if one considers arbitrary tensor networks on most tensor diagrams. The tensor diagrams that admit classifications are very basic and uninteresting, reducing to a small subset of the tame cases arising in quiver theory.

However, in applications, one often does not consider arbitrary tensor networks on a given diagram. For example, in computer science applications, the dimensions of the representation are limited so that every wire has a two dimensional vector space associated to it. In tensor network states, or the study of entanglement of density operators, one is interested in closed orbits rather than all orbits. 

While it may be that many of these problems will still be wild, a much larger subset may be tractable than those presented in the current work. It is completely unknown how a trichotomy theorem would manifest itself with these restrictions and the current work represents only the first step towards understanding the difficulty of these more common questions. As such, much more work in this direction is necessary.

\subsection*{Acknowledgments} The author would like to thank Llu\'is Vena for helpful discussions. The research leading to these results has received funding from the European Research Council under the European Union's Seventh Framework Programme (FP7/2007-2013) / ERC grant agreement No 339109.

\bibliographystyle{plain}
\bibliography{bibfile}

 \end{document}